\documentclass[a4paper,10pt]{article}
\usepackage[ansinew]{inputenc}          
\usepackage{amssymb,amsthm,amsmath}     
\usepackage[dvips]{graphicx}            
\usepackage[all]{xy}
\usepackage{subfigure}
\usepackage{algorithm,algorithmic}
\usepackage[hmargin=3.7cm,vmargin=4cm]{geometry}

\theoremstyle{plain}
\newtheorem{thm}{Theorem}[section]

\newtheorem{lemma}[thm]{Lemma}

\theoremstyle{definition}
\newtheorem{definition}[thm]{Definition}
\newtheorem{example}[thm]{Example}

\newcommand{\Z}{\mathbb{Z}}

\newcommand{\St}{\mathcal{S}}

\newcommand{\bu}{\mathbf{u}}
\newcommand{\bv}{\mathbf{v}}
\newcommand{\bc}{\mathbf{c}}

\newcommand{\ms}{\bar{s}}

\begin{document}

\title{New lower bound for $2$-identifying code in the square grid}

\author{\textbf{Ville Junnila}\thanks{Phone +358 2 333 6021, Fax +358 2 231 0311.} \\
Department of Mathematics\\
University of Turku, FI-20014 Turku, Finland\\
viljun@utu.fi}
\date{}
\maketitle

\begin{abstract}
An $r$-identifying code in a graph $G = (V,E)$ is a subset $C \subseteq V$ such that for each $u \in V$ the intersection of $C$ and the ball of radius $r$ centered at $u$ is nonempty and unique. Previously, $r$-identifying codes have been studied in various grids. In particular, it has been shown that there exists a $2$-identifying code in the square grid with density $5/29 \approx 0.172$ and that there are no $2$-identifying codes with density smaller than $3/20 = 0.15$. Recently, the lower bound has been improved to $6/37 \approx 0.162$ by Martin and Stanton (2010). In this paper, we further improve the lower bound by showing that there are no $2$-identifying codes in the square grid with density smaller than $6/35 \approx 0.171$.
\end{abstract}
\noindent\emph{Keywords:} Identifying code; domination; square grid; infinite grid

\noindent\emph{AMS Subject Classifications:} 05C70, 68R05, 94B65,
94C12

\section{Introduction}

Let $G = (V, E)$ be a simple, connected and undirected graph with $V$ as the set of vertices and $E$ as the set of edges. Let $u$ and $v$ be vertices in $V$. If $u$ and $v$ are adjacent to each other, then the edge between $u$ and $v$ is denoted by $\{u, v\}$ or in short by $uv$. The \emph{distance} between $u$ and $v$ is denoted by $d(u,v)$ and is defined as the number of edges in any shortest path between $u$ and $v$. Let $r$ be a positive integer. We say that $u$ $r$\emph{-covers} $v$ if the distance $d(u,v)$ is at most $r$. The \emph{ball of radius} $r$ \emph{centered at} $u$ is defined as
\[
B_r(u) = \{ x \in V \ | \ d(u,x) \leq r \} \textrm{.}
\]

A nonempty subset of $V$ is called a \emph{code} in $G$, and its elements are called \emph{codewords}. Let $C \subseteq V$ be a code and $u$ be a vertex in $V$. An \emph{I-set} (or an \emph{identifying set}) of the vertex $u$ with respect to the code $C$ is defined as \[
I_r(C;u) = I_r(u) = B_r(u) \cap C \textrm{.}
\]
The following definition of identifying codes is due to Karpovsky \emph{et al.} \cite{kcl}.
\begin{definition}
Let $r$ be a positive integer. A code $C \subseteq V$ is said to be $r$-\emph{identifying} in $G$ if for all $u,v \in V$ ($u \neq v$) the set $I_r(C;u)$ is nonempty and
\[
I_r(C;u) \neq I_r(C;v) \textrm{.}
\]
\end{definition}

Let $X$ and $Y$ be subsets of $V$. The \emph{symmetric difference} of $X$ and $Y$ is defined as $X \, \triangle \, Y = (X \setminus Y) \cup (Y \setminus X)$. We say that the vertices $u$ and $v$ are $r$\emph{-separated} by a code $C \subseteq V$ (or by a codeword of $C$) if the symmetric difference $I_r(C;u) \, \triangle \, I_r(C;v)$ is nonempty. The definition of $r$-identifying codes can now be reformulated as follows: $C \subseteq V$ is an $r$-identifying code in $G$ if and only if for all $u,v \in V$ ($u \neq v$) the vertex $u$ is $r$-covered by a codeword of $C$ and 
\[
I_r(C;u) \, \triangle \, I_r(C;v) \neq \emptyset \textrm{.}
\]

In this paper, we study identifying codes in the square grid. We define the square grid $G_S = (V_S, E_S)$ as follows: the set of vertices $V_S = \Z^2$ and the set of edges
\[
E_S = \{ \{ \bu, \bv \} \ | \ \bu, \bv \in \Z^2, \bu - \bv \in \{(0,\pm 1), (\pm 1, 0)\} \} \textrm{.}
\]
In other words, two vertices are adjacent in $G_S$ if the Euclidean distance between them is equal to $1$. Part of the infinite square grid $G_S$ is illustrated in Figure~\ref{SquareIllustrated}, where lines represent the edges and intersections of the lines represent the vertices of $G_S$. A $2$-identifying code in $G_S$, which is formed by the shaded vertices, is constructed by repeating the pattern in the dashed box.

\begin{figure}
\centering
\includegraphics[height=175pt]{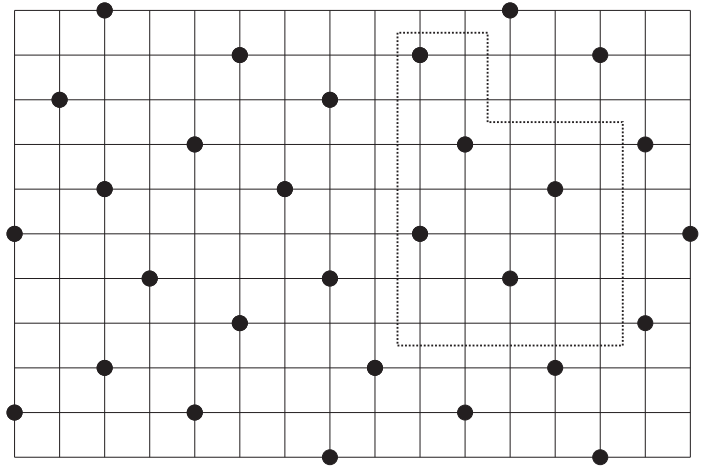}
\caption{The square grid $G_S$ and a $2$-identifying code in $G_S$ with density $5/29$ illustrated.} 
\label{SquareIllustrated}
\end{figure}

To measure the size of an identifying code in the infinite square grid, we introduce the notion of density. For the formal definition, we first define
\[
Q_n = \{ (x,y) \in V_S \ | \ |x| \leq n, |y| \leq n \} \textrm{.}
\]
Then the \emph{density} of a code $C \subseteq V_S$ is defined as
\[
D(C) = \limsup_{n \rightarrow \infty} \frac{|C \cap Q_n|}{|Q_n|} \textrm{.}
\]
Naturally, we try to construct identifying codes with as small density as possible. Moreover, we say that an $r$-identifying code is \emph{optimal}, if there do not exist any $r$-identifying codes with smaller density.

Previously, $r$-identifying codes in $G_S$ have been studied in various papers. In the case $r=1$, Cohen \emph{et al.} \cite{monta} constructed a $1$-identifying code in $G_S$ with density $7/20$. Moreover, it has been proved by Ben-Haim and Litsyn \cite{BHL:ExtMinDenSqr} that this construction is optimal, i.e. there do not exist $1$-identifying codes in $G_S$ with smaller density. For general $r \geq 2$, Charon \emph{et al.} \cite{chhl} showed that each $r$-identifying code $C$ in $G_S$ has density $D(C) \geq 3/(8r+4)$. Furthermore, Honkala and Lobstein \cite{hlo} presented $r$-identifying codes in the square with $D(C) = 2/(5r)$ if $r$ is even and $D(C) = 2r/(5r^2-2r+1)$ if $r$ is odd. For small values of $r$, these general constructions have been improved in \cite{chl} by Charon \emph{et al}.

In this paper, we focus our study to the case $r=2$. In \cite{hlo}, besides the general constructions, Honkala and Lobstein also presented a $2$-identifying code with density $5/29 \approx 0.172$ (see Figure~\ref{SquareIllustrated}). By the general lower bound mentioned above, we know that each $2$-identifying code $C$ in $G_S$ satisfies $D(C) \geq 3/20 = 0.15$. This general lower bound was improved by Martin and Stanton in \cite{MSlbg} by showing that the density of any $2$-identifying code in the square grid is at least $6/37 \approx 0.162$. In this paper, we further improve this lower bound to $6/35 \approx 0.171$ hence significantly reducing the gap between the lower and upper bounds.

The proof of the lower bound is based on a technique which combines the concept of share with an averaging process. Previously, a similar approach has been used in \cite{JLoidhexWCC} and \cite{JLolhg} to get an optimal lower bound for the $2$-identifying code in the hexagonal grid. The share and its usage in obtaining lower bounds are explained in Section~\ref{SquareBasics}, and the actual proof with the averaging process is presented in Section~\ref{SquareLowerBound}. Although the technique applied in this paper is similar to the one previously used in the case of hexagonal grid, things get much more complicated in this case. Therefore, in order to prove the lower bound, we need to combine exact mathematical proofs with some exhaustive computer searches as explained in Section~\ref{SquareLowerBound}.

\section{Lower bounds using share} \label{SquareBasics}

Let $G = (V, E)$ be a simple, connected and undirected graph. Assume that $C$ is a code in $G$. The following concept of the share of a codeword has been introduced by Slater in \cite{S:fault-tolerant}. The \emph{share} of a codeword $c \in C$ is defined as
\[
s_r(C; c) = s_r(c) = \sum_{u \in B_r(c)} \frac{1}{|I_r(C; u)|} \textrm{.}
\]
The notion of share proves to be useful in determining lower bounds of $r$-identifying codes (as explained in the following).

Assume that $G=(V,E)$ is a finite graph and $C$ is a code in $G$ such that $B_r(u) \cap C$ is nonempty for all $u \in V$. Then it is easy to conclude that $\sum_{c \in C} s_r(C;c) = |V|$. Assume further that $s_r(C;c) \leq \alpha$ for all $c \in C$. Then we have $|V| \leq \alpha |C|$, which immediately implies
\[
|C| \geq \frac{1}{\alpha} |V| \textrm{.}
\]
Assume then that for any $r$-identifying code $C$ in $G$ we have $s_r(C;c) \leq \alpha$ for all $c \in C$. By the aforementioned observation, we then obtain the lower bound $|V|/\alpha$ for the size of an $r$-identifying code in $G$. In other words, by determining the maximum share, we obtain a lower bound for the minimum size of an $r$-identifying code.

The previous reasoning can also be generalized to the case when an infinite graph is considered. In particular, if for any $r$-identifying code in $G_S$ we have $s_r(C;\bc) \leq \alpha$ for all $\bc \in C$, then it can be shown that the density of an $r$-identifying code in $G_S$ is at least $1/\alpha$ (compare to Theorem~\ref{SquareR2MainThm}). The main idea behind the proof of the lower bound (in Section~\ref{SquareLowerBound}) is based on this observation, although we use a more sophisticated method by showing that for any $2$-identifying code the share is on \emph{average} at most $35/6$. In Theorem~\ref{SquareR2MainThm}, we present a formal proof to verify that this method is indeed valid.

\medskip

In the proof of the lower bound, we need to determine upper bounds for shares of codewords. 
To formally present a way to estimate shares, we first need to introduce some notation. Let $C$ be an $r$-identifying code in $G$, $C'$ be a subset of $C$ and $c$ be a codeword belonging to $C'$. Since $C$ is an $r$-identifying code in $G$, the identifying sets $I_r(C;u)$ are nonempty and unique for all $u \in B_r(c)$. However, as $C'$ is a subset of $C$, all the $I$-sets $I_r(C';u)$ are not necessarily different (when $u$ goes through the vertices in $B_r(c)$). Assume that among these $I$-sets there exists $k$ different ones and that these different identifying sets are denoted by $I_1, I_2, \ldots, I_k$. Furthermore, denote the number of identifying sets equal to $I_j$ by $i_j$ $(j=1,2,\ldots,k)$. Now we are ready to present the following lemma, which provides a method to estimate the shares of the codewords.
\begin{lemma} \label{SimpleEstLemma}
Let $C$ be an $r$-identifying code in $G$ and let $C'$ be a nonempty subset of $C$. For $c \in C'$, using the previous notations, we have
\[
s_r(C;c) \leq \sum_{j=1}^{k} \left( \frac{1}{|I_j|} + (i_j-1)\frac{1}{|I_j|+1} \right) \textrm{.}
\]
\end{lemma}
\begin{proof}
Assume that $c \in C'$. Then, for each $j=1,2,\ldots,k$, define $\mathcal{I}_j = \{ u \in B_r(c) \, | \, I_j=I_r(C';u) \}$. 
Now it is obvious that for at most one vertex $u \in \mathcal{I}_j$ we have $I_j = I_r(C;u)$ and the other vertices of $\mathcal{I}_j$ are $r$-covered by at least $|I_j|+1$ codewords of $C$. Hence, the claim immediately follows.
\end{proof}

The previous lemma will be used numerous times in this paper. The computations needed in applying this lemma may sometimes be a little bit tedious, but always very straightforward. Moreover, it is easy to implement an algorithm to compute the upper bound given by the lemma. We illustrate the use of the previous lemma in the following example.

\begin{figure}
\centering
\includegraphics[height=170pt]{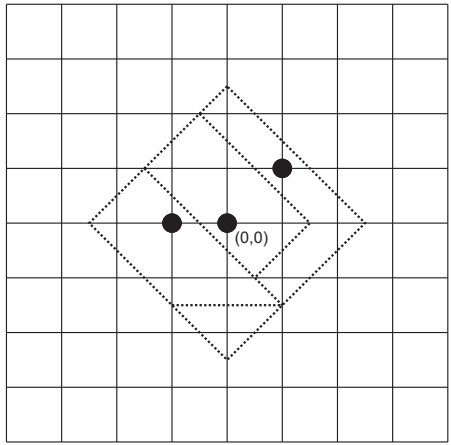}
\caption{The Example \ref{SquareIllExampleAdjacentLemma} illustrated. The shaded dots represent codewords of $C$.} \label{SquareExampleIllustrated}
\end{figure}

\begin{example} \label{SquareIllExampleAdjacentLemma}
Let $C$ be a $2$-identifying code in the square grid $G_S$. Assume that $C' = \{(-1,0),(0,0),(1,1)\}$ is a subset of $C$ (see Figure~\ref{SquareExampleIllustrated}). Now we have the following facts (which correspond to the dashed areas in the figure):
\begin{itemize}
\item $I_2(C';\bu) = \{(0,0),(1,1)\}$ for $\bu \in \{(0,2), (1,-1), (1,1),(2,0)\}$,
\item $I_2(C';\bu) = \{(-1,0),(0,0),(1,1)\}$ for $\bu \in \{(-1,1), (0,0), (0,1),(1,0)\}$,
\item $I_2(C';\bu) = \{(-1,0),(0,0)\}$ for $\bu \in \{(-2,0), (-1,-1), (-1,0),(0,-1)\}$, and
\item $I_2(C';(0,-2)) = \{(0,0)\}$.
\end{itemize}
Thus, by Lemma~\ref{SimpleEstLemma}, we obtain that
\[
s_2(C;(0,0)) \leq \left(\frac{1}{2} + 3 \cdot \frac{1}{3} \right) + \left(\frac{1}{3} + 3 \cdot \frac{1}{4} \right) + \left(\frac{1}{2} + 3\cdot \frac{1}{3} \right) + 1 = \frac{61}{12} \textrm{.}
\]
\end{example}

\section{The proof of the lower bound} \label{SquareLowerBound}

In this section, we assume that $C$ is a $2$-identifying code in $G_S$. In what follows, we show that on average the share of a codeword is at most $35/6$. Therefore, as shown in Theorem~\ref{SquareR2MainThm}, we obtain that the density $D(C) \geq 6/35$.

The averaging process is done by introducing a \emph{shifting scheme} designed to even out the shares among the codewords of $C$. The shifting scheme can also be understood as a discharging method, which is a terminology more commonly used in the literature. The rules of the shifting scheme are defined in Section~\ref{SquareSubsectionRules}. In Section~\ref{SquareSubsectionMainTheorem}, we introduce three lemmas, which state the following results:
\begin{itemize}
\item If $s_2(\bc) > 35/6$ for some $\bc \in C$, then at least $s_2(\bc) - 35/6$ units of share is shifted from $\bc$ to other codewords. (Lemma~\ref{SquareR2ReceivingLemma})
\item If share is shifted to a codeword $\bc \in C$, then $s_2(\bc) \leq 35/6$ and the codeword $\bc$ receives at most $35/6 - s_2(\bc)$ units of share. (Lemmas~\ref{SquareR2ReceiveLemma2} and \ref{SquareR2ReceiveLemma1})
\end{itemize}
In other words, after the shifting is done, the share of each codeword is at most $35/6$. Using this fact, we are able to prove the main theorem (Theorem~\ref{SquareR2MainThm}) of the paper according to which $D(C) \geq 6/35$. Finally, in Section~\ref{SquareSubsectionLemmasProofs}, we provide the proofs of the lemmas.

\subsection{The rules of the shifting scheme} \label{SquareSubsectionRules}

\begin{figure}[htp]
\centering
\subfigure[Rule~1]{
\includegraphics[height=100pt]{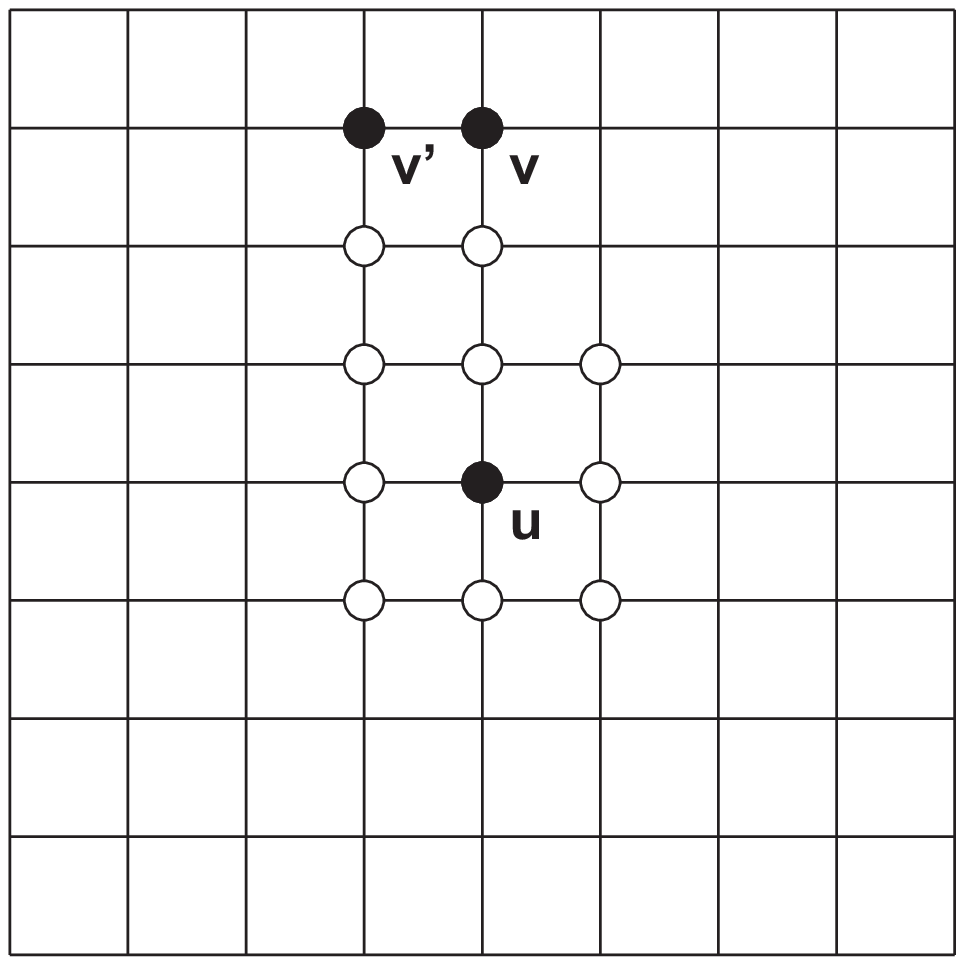}
\label{SquareR2Rule1}
}
\subfigure[Rule~2]{
\includegraphics[height=100pt]{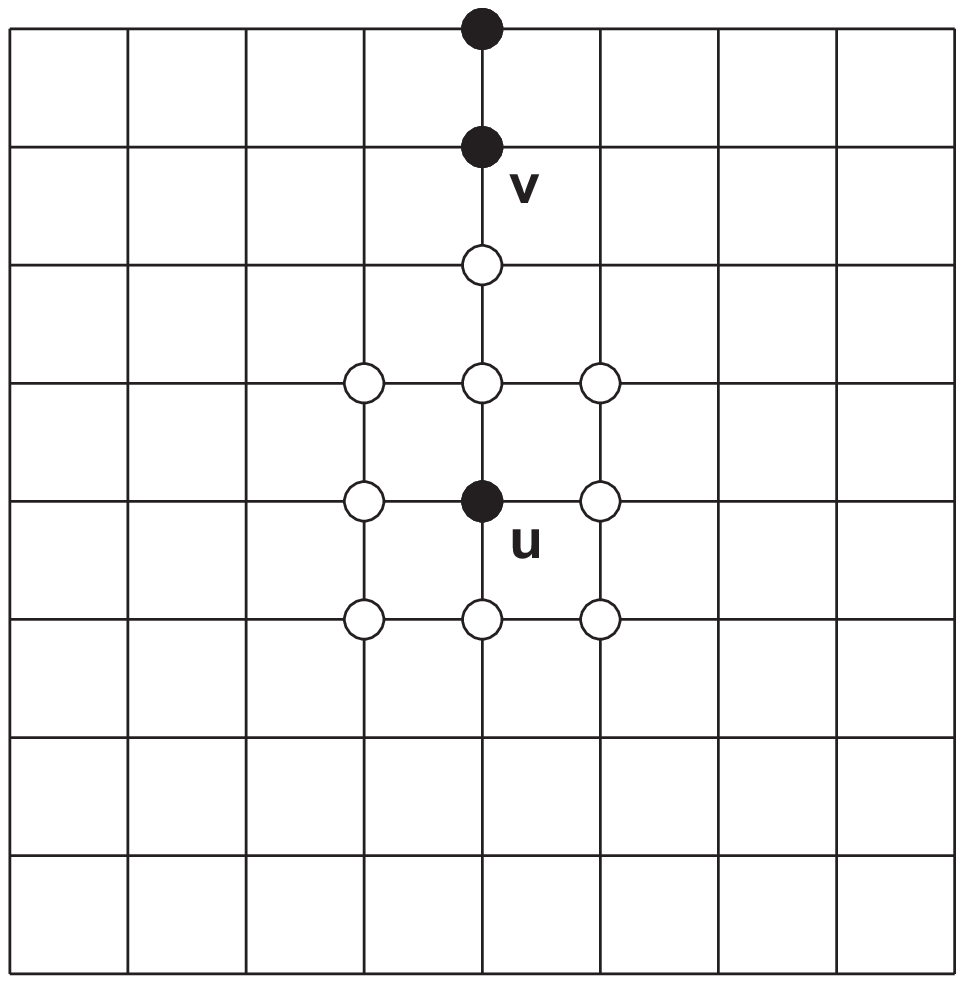}
\label{SquareR2Rule2}
}
\subfigure[Rule~3]{
\includegraphics[height=100pt]{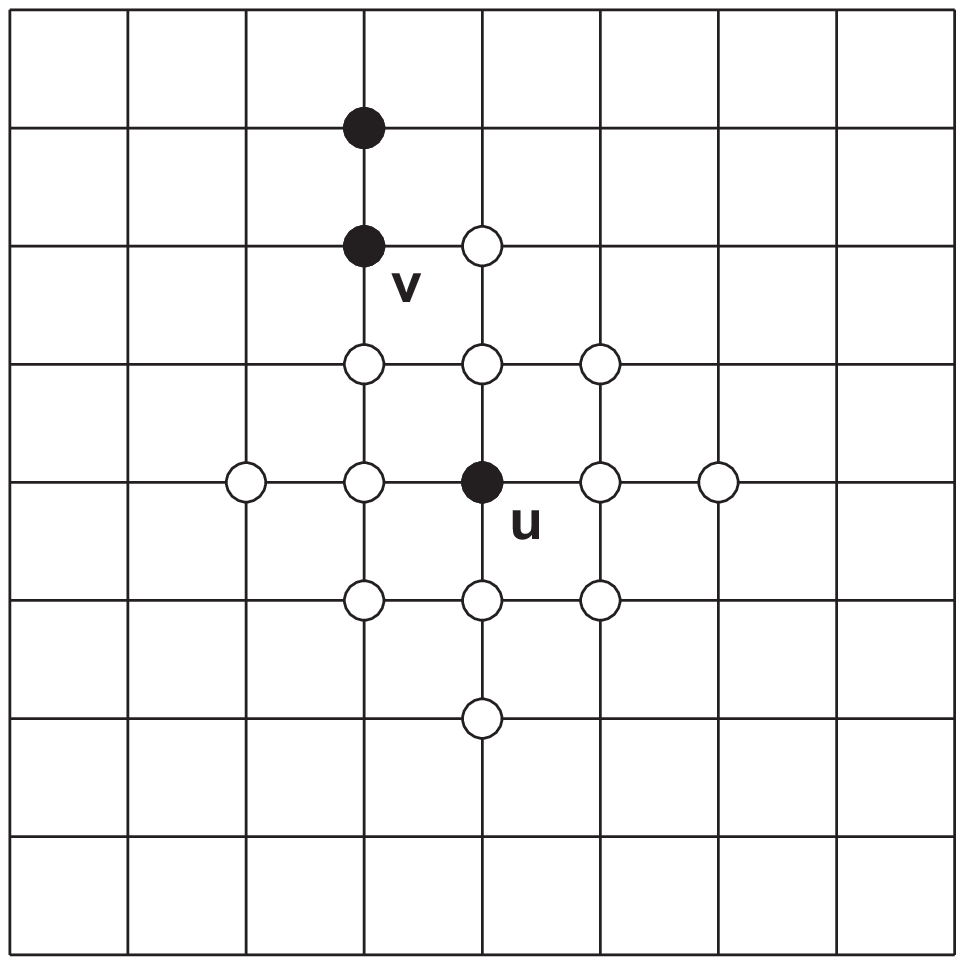}
\label{SquareR2Rule3}
}
\subfigure[Rule~4]{
\includegraphics[height=100pt]{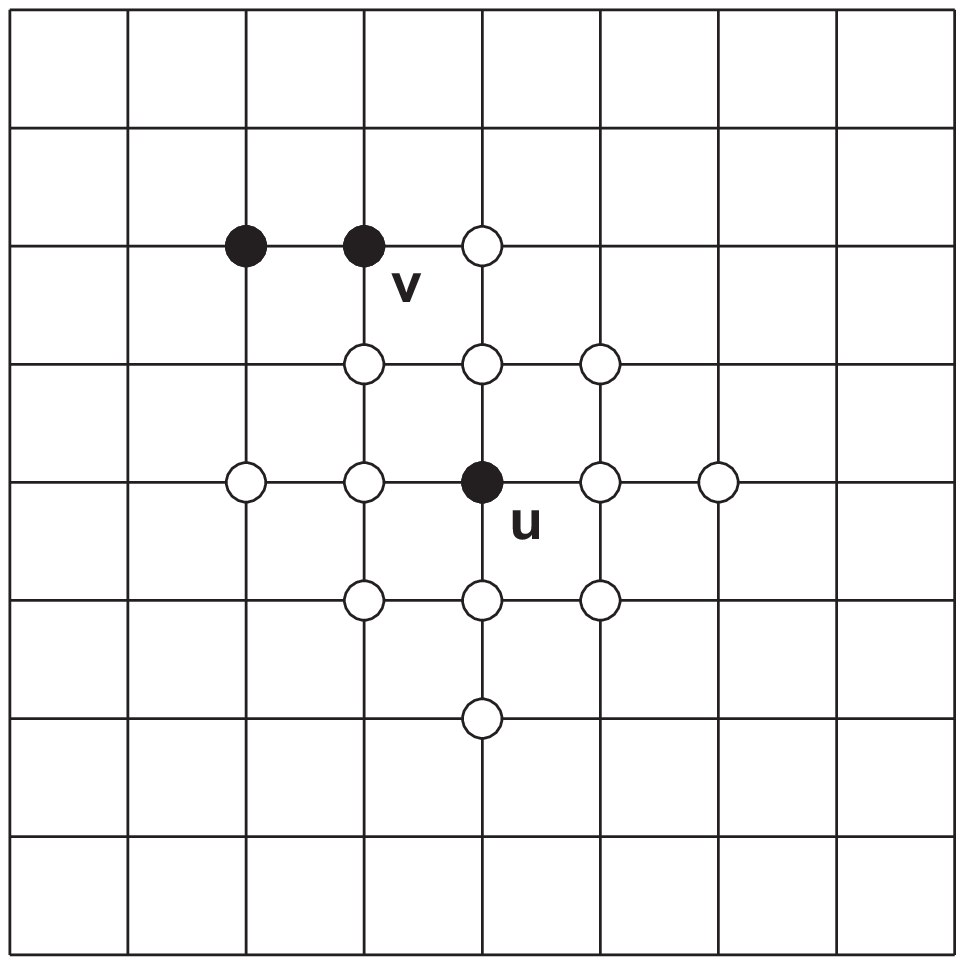}
\label{SquareR2Rule4}
}
\subfigure[Rule~5]{
\includegraphics[height=100pt]{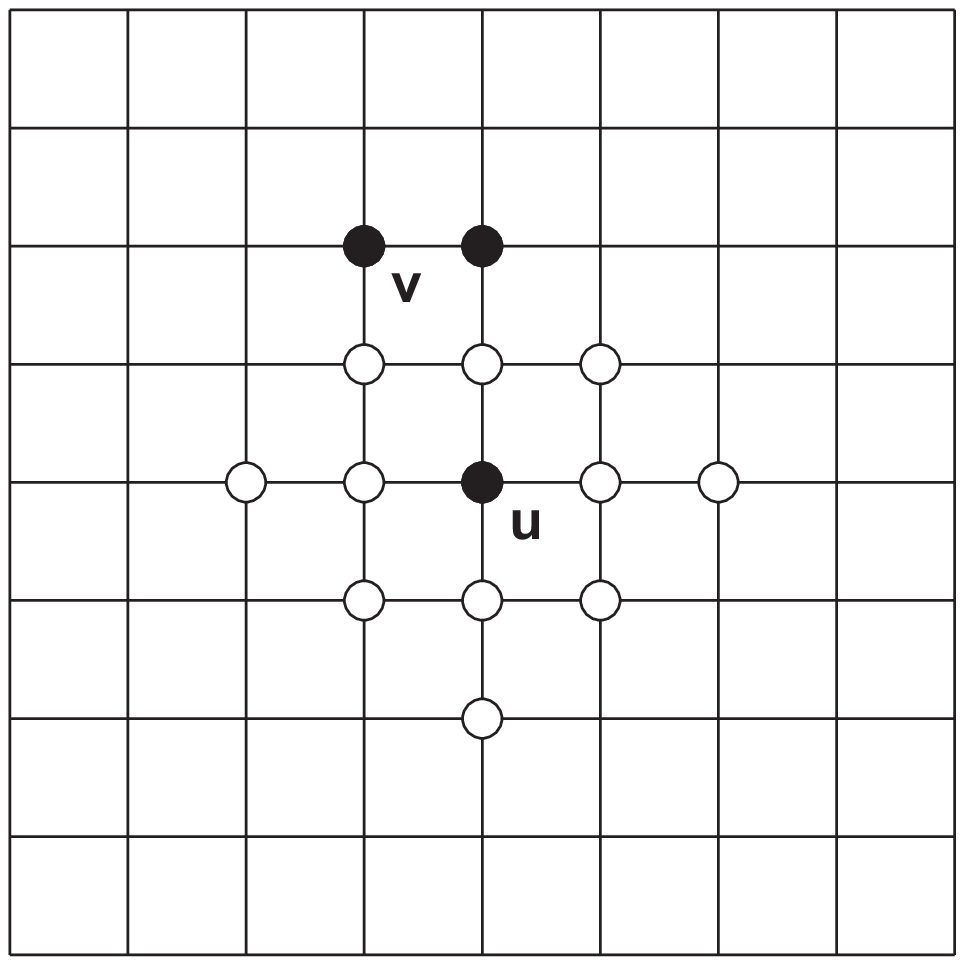}
\label{SquareR2Rule5}
}
\subfigure[Rule~6]{
\includegraphics[height=100pt]{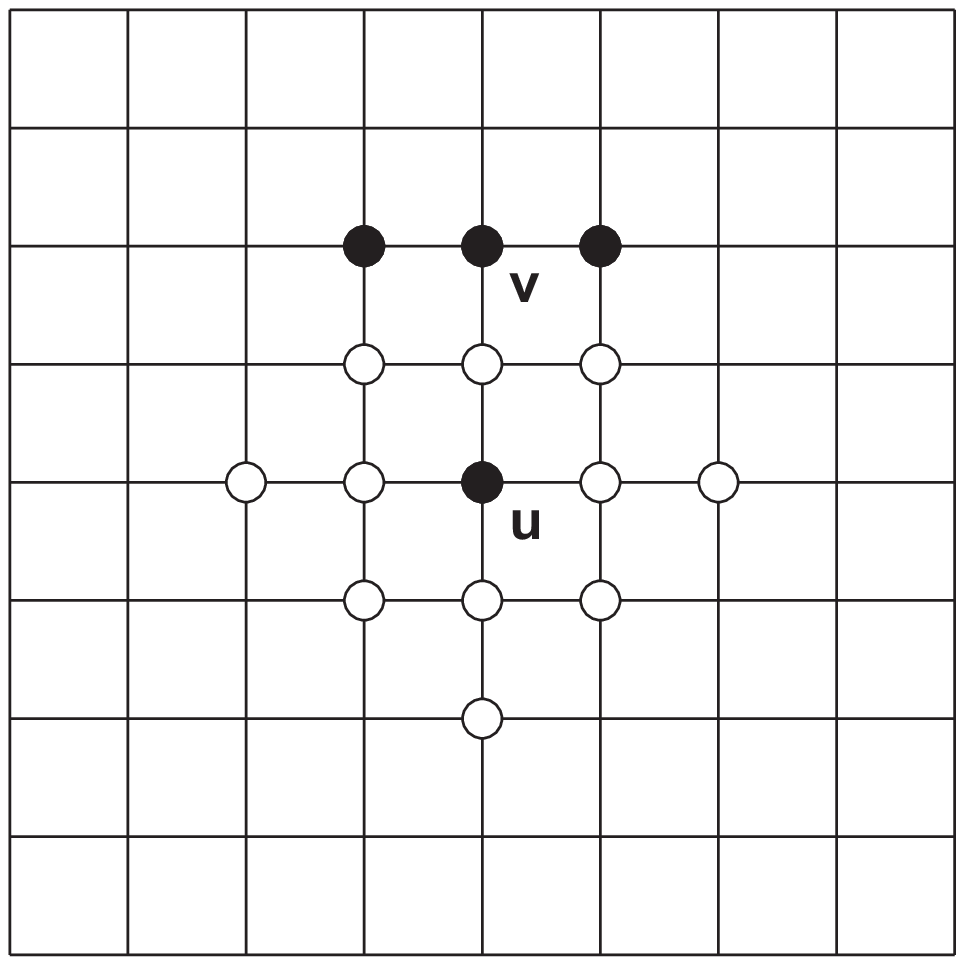}
\label{SquareR2Rule6}
}
\subfigure[Rule~7]{
\includegraphics[height=100pt]{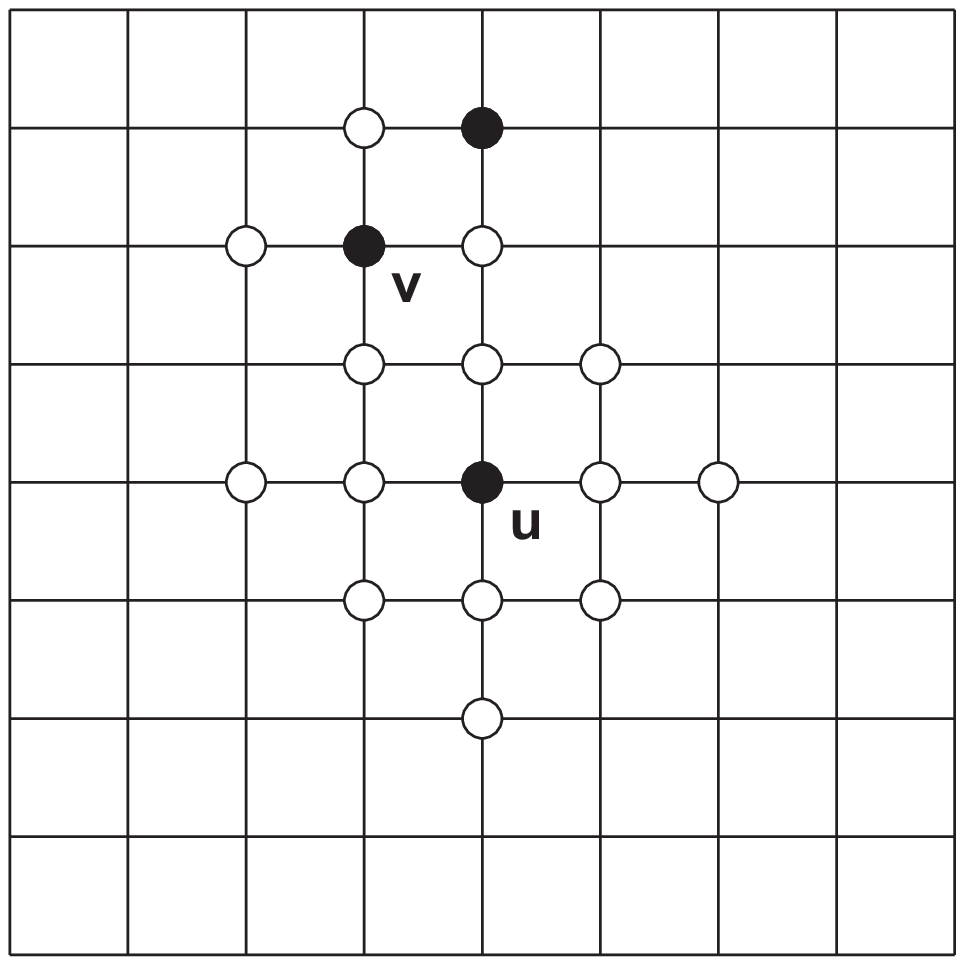}
\label{SquareR2Rule7}
}
\subfigure[Rule~8]{
\includegraphics[height=100pt]{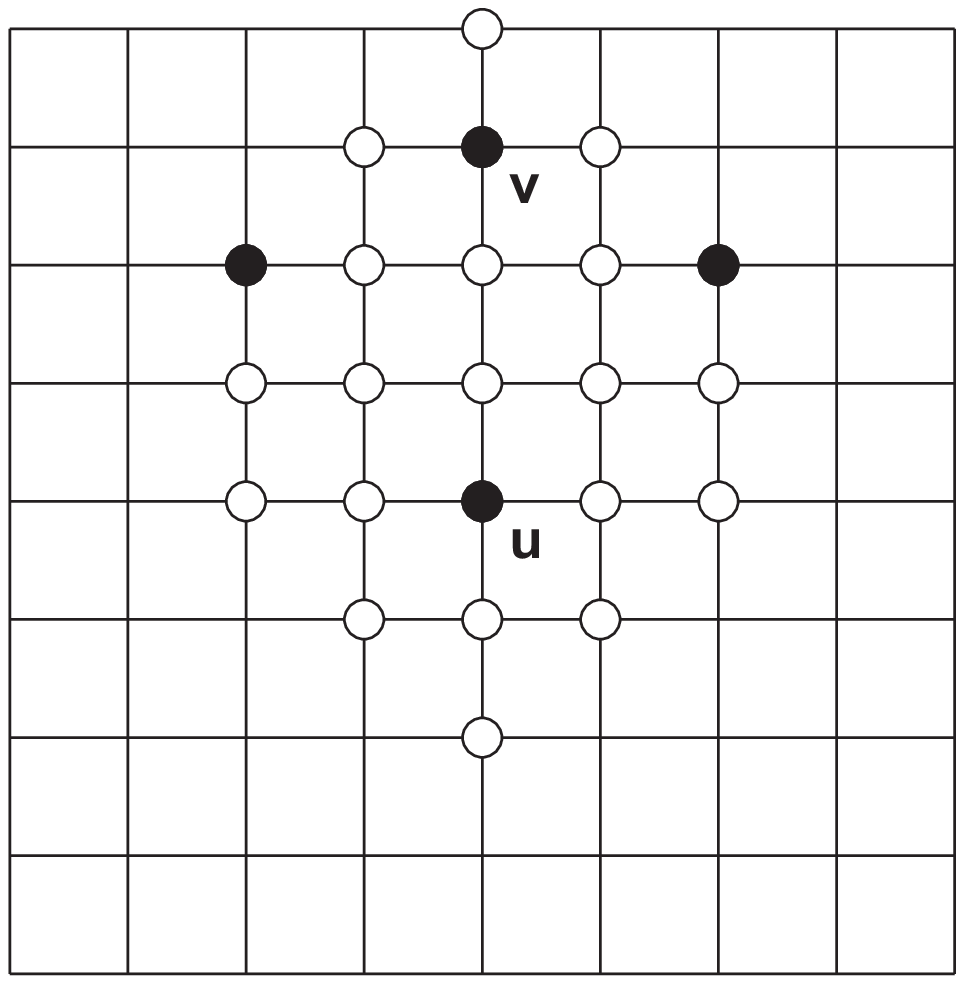}
\label{SquareR2Rule8}
}
\subfigure[Rule~9]{
\includegraphics[height=100pt]{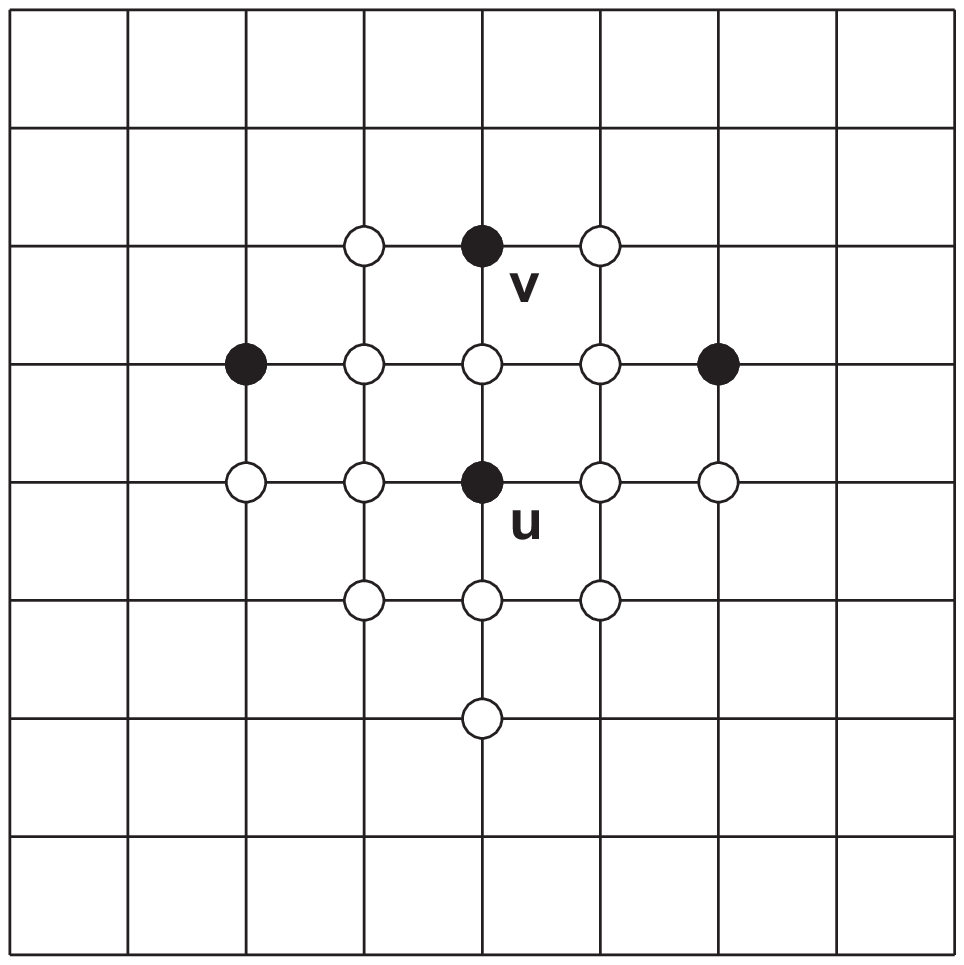}
\label{SquareR2Rule9}
}
\subfigure[Rule~10]{
\includegraphics[height=100pt]{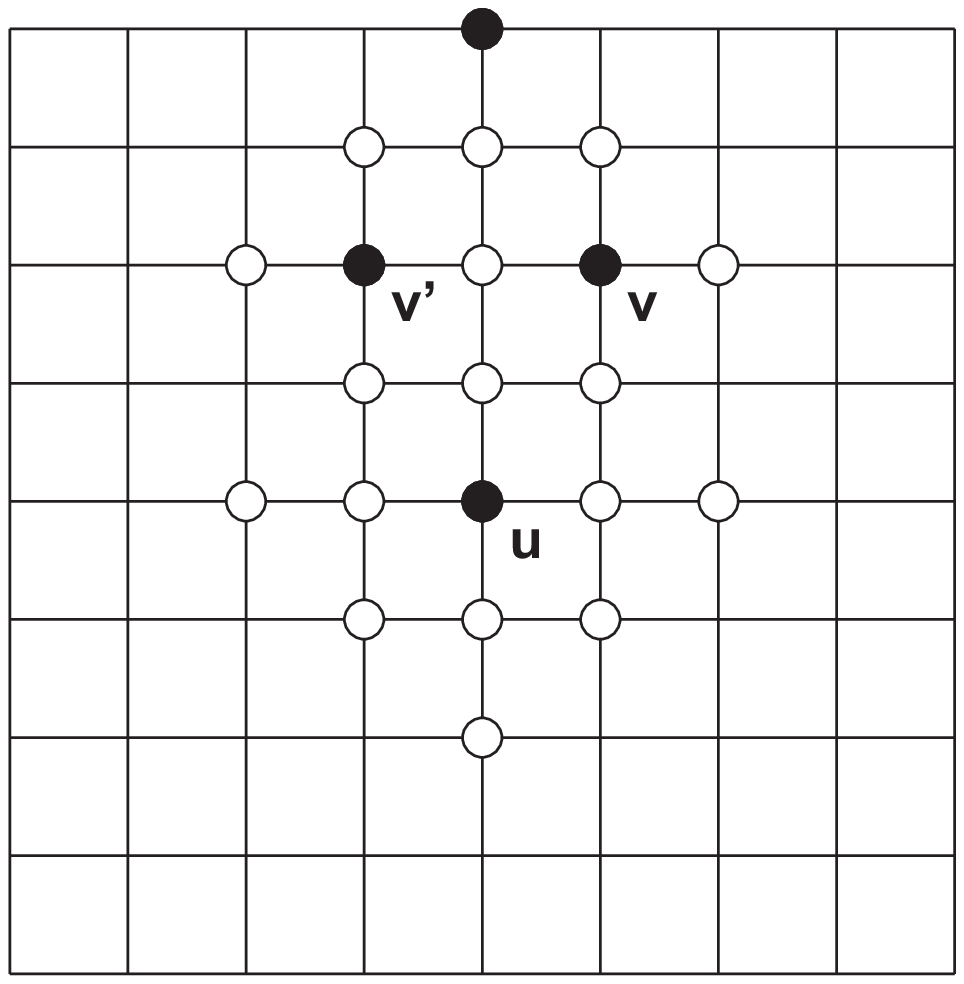}
\label{SquareR2Rule10}
}
\caption{The rules of the shifting scheme illustrated. The shaded dots represent codewords and the light dots represent non-codewords.}
\label{SquareR2RuleFigure}
\end{figure}

The \emph{rules} of the shifting scheme are illustrated in Figure~\ref{SquareR2RuleFigure}. In addition to the constellations in the figure, translations, rotations (by $\pi/2$, $\pi$ and $3\pi/2$ about the vertex $\bu$) and reflections (over the line passing vertically through $\bu$) can be applied to each rule in such a way that structure of the underlying graph $G_S$ is preserved. In the rules, share is shifted as follows:
\begin{itemize}
\item In Rule~1, if $I_2(\bv) \setminus I_2(\bv') \neq \emptyset$, then we shift $1/5$ units of share from $\bu$ to $\bv$, else $1/5$ units of share is shifted from $\bu$ to $\bv'$.
\item In Rules~2 and 5, we shift $1/30$ units of share from $\bu$ to $\bv$.
\item In Rule~3, we shift $1/12$ units of share from $\bu$ to $\bv$.
\item In Rules~4, 7 and 9, we shift $7/60$ units of share from $\bu$ to $\bv$.
\item In Rules~6 and 8, we shift $1/20$ units of share from $\bu$ to $\bv$.
\item In Rule~10, if $I_2(\bv) \setminus I_2(\bv') \neq \emptyset$, then we shift $7/60$ units of share from $\bu$ to $\bv$, else $7/60$ units of share is shifted to $\bv'$.
\end{itemize}

The modified share of a codeword $\bc \in C$, which is obtained after the shifting scheme is applied, is denoted by $\ms_2(\bc)$. The use of the rules is illustrated in the following simple example.
\begin{figure}
\centering
\subfigure[The first example]{
\includegraphics[height=150pt]{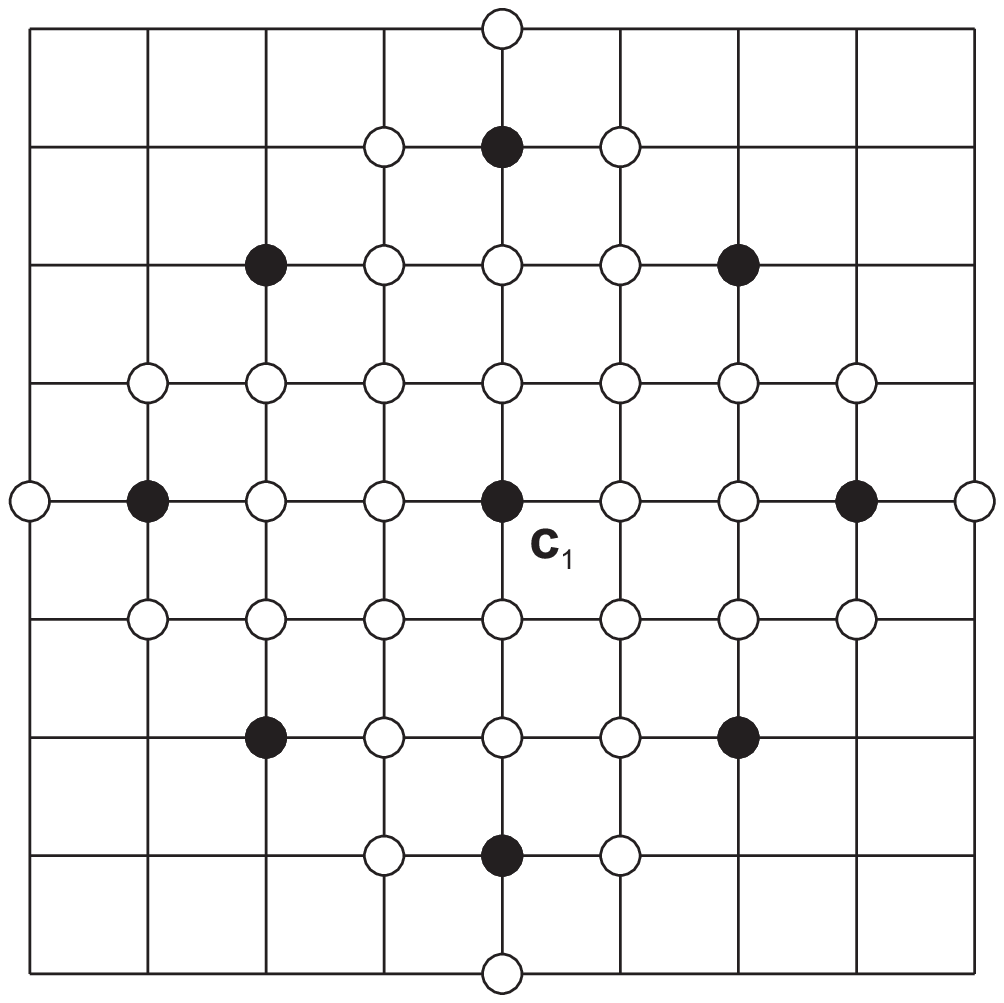}
\label{ExRulesUsage1}
}
\subfigure[The second example]{
\includegraphics[height=150pt]{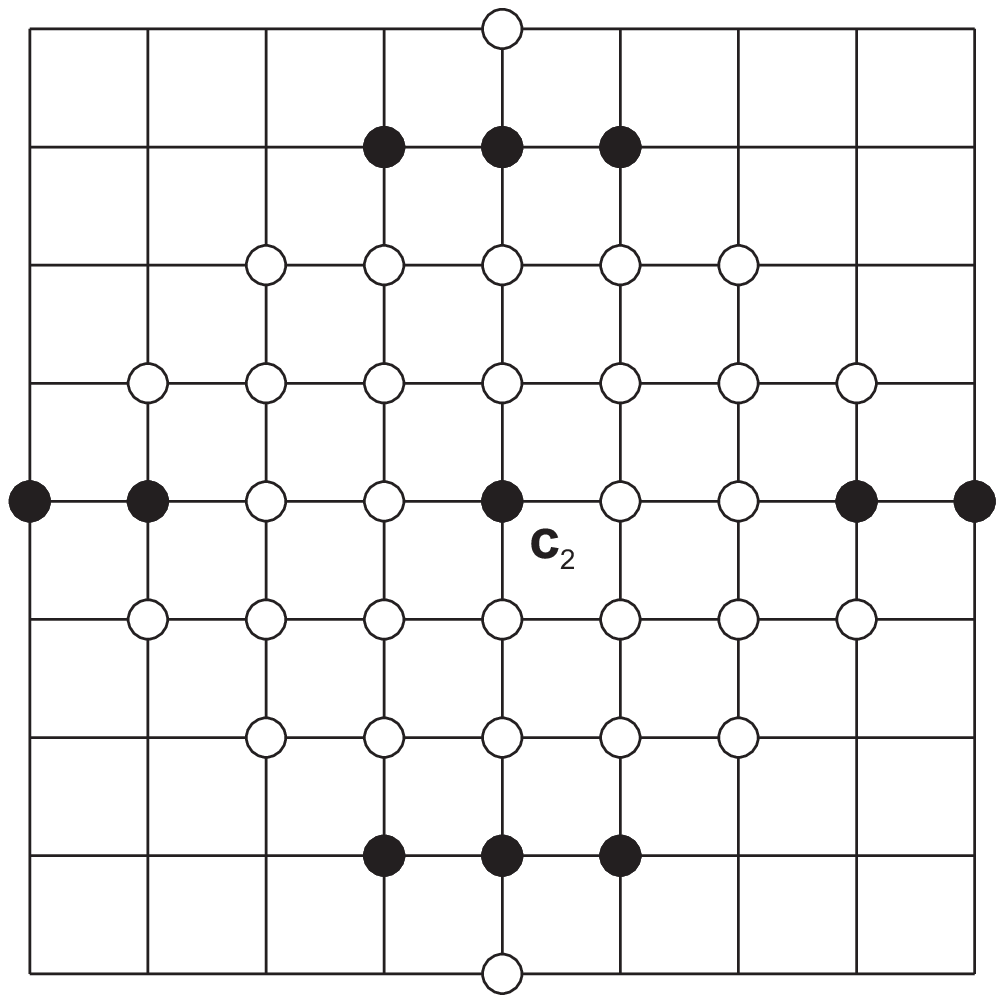}
\label{ExRulesUsage2}
}
\caption{The constellations of Example~\ref{ExampleForShifting} illustrated.}
\label{ShiftingExample}
\end{figure}
\begin{example} \label{ExampleForShifting}
Consider the codewords $\bc_1$ and $\bc_2$ with the constellations of codewords and non-codewords inside $B_4(\bc_1)$ and $B_4(\bc_2)$ as illustrated in Figure~\ref{ShiftingExample}. The shares of the codewords $\bc_1$ and $\bc_2$ are $6$ and $37/6$, respectively. Rule~8 can be applied to $\bc_1$ (four times). Hence, $1/20$ units of share is shifted from $\bc_1$ to each of the vertices $(-3,0)$, $(0,-3)$, $(0,3)$ and $(3,0)$. (Recall that translations, rotations and reflections can be applied to the constellations in Figure~\ref{SquareR2RuleFigure}.) Therefore, we have $\ms_2(\bc_1) = s_2(\bc_1) - 4 \cdot 1/20 = 6-1/5 = 29/5 \leq 35/6$.

In the case of the codeword $\bc_2$, share can be shifted from $\bc_2$ according to Rules~1 and 2. More precisely, share can be shifted twice from $\bc_2$ both to the vertices in $\{(-1,-3), (0,-3), (1,-3) \}$ and $\{(-1,3), (0,3), (1,3) \}$ according to Rule~1. Hence, $4 \cdot 1/5$ units of share is shifted according to Rule~1. Moreover, (in total) $2 \cdot 1/30$ units of share is shifted from $\bc_2$ to $(-3,0)$ and $(3,0)$ according to Rule~2. Therefore, we have $\ms_2(\bc_2) = s_2(\bc_2) - 4 \cdot 1/5 - 2 \cdot 1/30 = 37/6 - 4/5 -1/15 = 53/10 \leq 35/6$.
\end{example}

\subsection{The main theorem} \label{SquareSubsectionMainTheorem}

The following three lemmas show that $\ms_2(\bc) \leq 35/6$ for all $\bc \in C$. The proofs of the lemmas are postponed to Section~\ref{SquareSubsectionLemmasProofs}. It should be noted that the proofs of Lemmas~\ref{SquareR2ReceiveLemma2} and \ref{SquareR2ReceiveLemma1} are strictly mathematical proofs without any help from computers. However, in the proof of Lemma~\ref{SquareR2ReceivingLemma}, we also need to apply some exhaustive computer searches.
\begin{lemma} \label{SquareR2ReceiveLemma1}
Let $\bc \in C$ be a codeword such that $\bc$ is adjacent to another codeword and share is shifted to $\bc$ according to the rules. Then we have $\ms_2(\bc) \leq 35/6$.
\end{lemma}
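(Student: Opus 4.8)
The plan is to bound the \emph{modified} share $\ms_2(\bc)$ directly, by controlling two quantities separately: an upper bound on the raw share $s_2(\bc)$ and an upper bound on the total amount of share that can flow \emph{into} $\bc$ under the rules. The key structural hypothesis is that $\bc$ is adjacent to some other codeword $\bc' \in C$. First I would fix coordinates so that $\bc = (0,0)$ and (using translations, rotations and reflections, which the rules respect) place the adjacent codeword at $\bc' = (1,0)$, say. The presence of $\bc'$ one step away is what makes the raw share small: a large block of vertices in $B_2(\bc)$ lie within distance $2$ of \emph{both} $\bc$ and $\bc'$, so their $I$-sets have size at least $2$, and the $1/|I_2(u)|$ contributions to $s_2(\bc)$ are correspondingly depressed. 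Concretely I would apply Lemma~\ref{SimpleEstLemma} to the subset $C' = \{\bc, \bc'\}$, tallying the $25$ vertices of $B_2(\bc)$ by the value of $I_2(C';u)$, to extract a clean numerical upper bound on $s_2(\bc)$ of the form $s_2(\bc) \le \beta$ for some explicit $\beta$.

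Next I would bound the incoming share. Each rule sends a fixed, small amount ($1/30$, $1/20$, $1/12$, $7/60$ or $1/5$) along a specific local configuration, and a vertex can only \emph{receive} as the distinguished vertex $\bv$ (or $\bv'$) of some rule. So the second step is a case analysis over which rules can have $\bc$ in the receiving position, given that a codeword sits at $\bc'=(1,0)$. The adjacency constraint sharply limits the admissible local pictures: many rules require $\bv$ to have a prescribed codeword/non-codeword pattern in its neighborhood that is incompatible with an adjacent codeword, and those that remain can fire only a bounded number of times (bounded by the number of admissible directions the rule's template can be rotated/reflected into while keeping $\bc'$ a codeword). Summing the per-rule amounts over the surviving cases gives an explicit upper bound $\gamma$ on the total received share. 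The claim then follows from $\ms_2(\bc) = s_2(\bc) + (\text{received}) - (\text{sent}) \le \beta + \gamma \le 35/6$, where I would discard the nonnegative ``sent'' term and verify the final arithmetic $\beta + \gamma \le 35/6$ by hand.

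The main obstacle will be the interaction between the two bounds: the configurations that minimize $s_2(\bc)$ (many codewords crowding $B_2(\bc)$, pushing $I$-set sizes up) are precisely those that tend to forbid incoming rules, whereas configurations admitting more incoming share force $s_2(\bc)$ to be small, so the two estimates $\beta$ and $\gamma$ cannot both be large \emph{simultaneously}. A single uniform $\beta$ and a single uniform $\gamma$ may therefore be too crude to close the gap to $35/6$, and I expect the real work to be a more refined case split in which $s_2(\bc)$ and the received share are bounded \emph{jointly} according to the local codeword pattern around $\bc'$ (for instance, splitting on whether a second codeword is adjacent to $\bc$, or on the precise positions of codewords at distance $2$). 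Because the hypothesis here guarantees an adjacent codeword, I would lean on that adjacency to keep the number of genuinely distinct local patterns small enough to enumerate by hand, which is presumably why the authors single this case out as provable without the computer search that Lemma~\ref{SquareR2ReceivingLemma} requires. Finally I would double-check that no rule both sends from and receives at $\bc$ in a way that double-counts, and that the ``if $I_2(\bv)\setminus I_2(\bv')\neq\emptyset$'' branches in Rules~1 and~10 are handled on whichever branch actually deposits share at $\bc$.
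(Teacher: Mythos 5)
Your overall skeleton matches the paper's proof: fix $\bc=(0,0)$ with an adjacent codeword, determine which rules can deposit share at $\bc$ (the paper notes only Rules~1--6 and~9 can), bound $s_2(\bc)$ via Lemma~\ref{SimpleEstLemma} with a suitable subset $C'$, and close with a joint case analysis — the paper's primary split being on the \emph{number} of codewords adjacent to $\bc$ (one, two, three, or four), with subcases on codewords at distance $2$. You also correctly predict that the naive ``uniform $\beta+\gamma$'' version fails: indeed, with $C'=\{\bc,(-1,0)\}$ Lemma~\ref{SimpleEstLemma} gives exactly $(1/2+7\cdot\tfrac13)+(1+4\cdot\tfrac12)=35/6$, so there is zero slack for any incoming share whatsoever.

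However, there is a genuine gap: your refined fallback — a case split on the local codeword pattern, using only the adjacency constraint, the rule templates, and Lemma~\ref{SimpleEstLemma} applied to the codewords you can see — still cannot close the hardest subcases, because it never invokes the fact that $C$ is $2$-\emph{identifying}. Concretely, consider the local pattern in which $C$ meets $B_4(\bc)$ only in $\{\bc,(-1,0),(3,0)\}$ with $(3,0)$ having no other codeword near it. Rule~2 then fires (sender $(3,0)$, receiver $\bc$), shifting $1/30$ units to $\bc$; yet Lemma~\ref{SimpleEstLemma}, even with the best choice $C'=\{\bc,(-1,0),(3,0)\}$, still gives only $s_2(\bc)\leq (1/2+6\cdot\tfrac13)+\tfrac13+\tfrac12+(1+3\cdot\tfrac12)=35/6$, so your method yields $\ms_2(\bc)\leq 35/6+1/30$, which is not good enough. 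This pattern is killed only by the separation property: since $\bc$ and $(-1,0)$ must be $2$-separated, the symmetric difference $B_2(\bc)\,\triangle\,B_2(-1,0)$ must contain a codeword, and that \emph{forced} codeword (invisible to your analysis) is what pushes the share bound down to $23/4$ in the paper's corresponding subcase, leaving room for the incoming $1/30$. The paper invokes this separation argument repeatedly (``the symmetric difference $B_2(-1,0)\,\triangle\,B_2(\bc)$ contains a codeword'', and similar statements in Cases~(2) and~(3)); it is the indispensable ingredient that makes the incoming-share bound and the share bound compatible, and without it your enumeration of local patterns would terminate with cases exceeding $35/6$ that you have no means to exclude.
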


\begin{lemma} \label{SquareR2ReceiveLemma2}
Let $\bc \in C$ be a codeword such that $\bc$ is not adjacent to another codeword and share is shifted to $\bc$ according to the rules. Then we have $\ms_2(\bc) \leq 35/6$.
\end{lemma}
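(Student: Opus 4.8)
The plan is to prove the equivalent inequality $s_2(\bc) + R(\bc) \le 35/6$, where $R(\bc)$ denotes the total amount of share received by $\bc$. Since the share shifted \emph{away} from $\bc$ is nonnegative, this bound immediately gives $\ms_2(\bc) \le 35/6$, which is the assertion of the lemma. The whole argument is a finite case analysis over the ways in which an isolated codeword can appear as the recipient $\bv$ (or $\bv'$) in the rules of Figure~\ref{SquareR2RuleFigure}.

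First I would record a baseline estimate. Applying Lemma~\ref{SimpleEstLemma} with $C' = \{\bc\}$ makes all thirteen vertices of $B_2(\bc)$ share the single identifying set $\{\bc\}$, and yields only $s_2(\bc) \le 1 + 12\cdot\tfrac12 = 7$, which is too weak. Hence the isolation hypothesis must be exploited through the structure forced by the receiving rules. The crucial observation is that whenever $\bc$ receives share, some applicable rule places a sender $\bu$, itself a codeword, at distance $3$ or $4$ from $\bc$; such a $\bu$ always $2$-covers one or more of the eight boundary vertices of $B_2(\bc)$ (for instance a codeword at $(1,-3)$ covers both $(0,-2)$ and $(1,-1)$ in $B_2((0,0))$). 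The corresponding identifying sets then have size at least two, so their reciprocals drop from $1$ to at most $1/2$. I would therefore enlarge $C'$ to contain $\bc$ together with every sender forced by an applicable rule, together with the non-codewords prescribed by the rule's constellation, and re-run Lemma~\ref{SimpleEstLemma}; each forced codeword strictly lowers the estimate for $s_2(\bc)$.

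The next step is to organize the cases by the number and relative positions of the senders. Using the translations, rotations and reflections permitted for the rules, I would reduce to a short list of inequivalent configurations: a single sender, two senders on adjacent sides, two senders on opposite sides, and the configurations with three or four senders. For each one I would, on the one hand, read off the exact amount delivered to $\bc$ from the shift values of Section~\ref{SquareSubsectionRules} (for example $7/60$ for Rules~\ref{SquareR2Rule4},~\ref{SquareR2Rule7},~\ref{SquareR2Rule9} and~\ref{SquareR2Rule10}, $1/20$ for Rules~\ref{SquareR2Rule6} and~\ref{SquareR2Rule8}, and the smaller amounts for the remaining rules), and, on the other hand, bound $s_2(\bc)$ from the forced local picture via Lemma~\ref{SimpleEstLemma}. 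Summing these two quantities in each configuration should give a value at most $35/6$.

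The main obstacle will be twofold. First, I must verify that the enumeration of receiving configurations is exhaustive: every rule whose recipient is compatible with $\bc$ having no codeword neighbour has to be accounted for, and the mutual compatibility of several rules acting on the same $\bc$ has to be checked rather than assumed. Second, the margin is thin, because an isolated codeword carries the largest possible bare share, so the estimate of $s_2(\bc)$ must be pushed as far as the forced codewords and non-codewords allow; in the configurations with several senders the gain in received share has to be shown to be always outweighed by the additional drop in $s_2(\bc)$ produced by those very senders. Establishing this balance in each of the finitely many configurations is the crux of the argument.
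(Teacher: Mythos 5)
Your overall strategy (finite case analysis over receiving configurations, bounding $s_2(\bc)$ via Lemma~\ref{SimpleEstLemma} and adding the received share) matches the paper's in spirit, but your plan has a genuine gap: the codewords forced by the rule constellations alone are provably not enough to push the bound below $35/6$. Concretely, suppose $\bc=(0,0)$ receives share via Rule~9. The rule forces the sender $(0,-2)$ (note: at distance $2$ from $\bc$, not $3$ or $4$ as you claim), the flanking codewords $(\pm 2,-1)$, and a collection of non-codewords around the sender. Taking $C'=\{(0,0),(0,-2),(-2,-1),(2,-1)\}$, Lemma~\ref{SimpleEstLemma} yields exactly
\[
s_2(\bc) \leq 3\cdot\tfrac{5}{6} + \bigl(1+3\cdot\tfrac12\bigr) + \tfrac14 + 2\cdot\tfrac13 = \tfrac{71}{12} > \tfrac{70}{12} = \tfrac{35}{6}\textrm{,}
\]
so the estimate exceeds $35/6$ even before adding the $7/60$ units received. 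A similar failure occurs when $\bc$ receives share via a single application of Rule~8: the constellation codewords give $73/12$, and $73/12+1/20>35/6$. Since in these cases no other rule applies and no further codewords are forced by any constellation, your program of ``constellation codewords $+$ received share $\leq 35/6$'' cannot be completed, no matter how the configurations are organized.

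The missing ingredient, which is precisely how the paper closes these cases, is the $2$-identifying property of the full code $C$: pairs of vertices whose $I$-sets coincide with respect to $C'$ must be $2$-separated by $C$, and the non-codewords prescribed by the rules localize where the separating codeword can lie. In the Rule~9 situation above, $\bc$ and the sender $(0,-2)$ have identical $I$-sets with respect to $C'$; since every vertex of $B_2((0,-2)) \setminus B_2(\bc)$ is a non-codeword (forced by the sender's conditions in Rule~9), a codeword of $C$ must lie in $B_2(\bc) \setminus B_2((0,-2)) = \{(\pm 2,0),(\pm 1,1),(0,1),(0,2)\}$. Adding that codeword to $C'$ improves the bound to $s_2(\bc) \leq 67/12$, and $67/12 + 7/60 = 342/60 \leq 35/6$. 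Two further remarks: first, Lemma~\ref{SimpleEstLemma} takes a subset $C' \subseteq C$, so your suggestion to put ``the non-codewords prescribed by the rule's constellation'' into $C'$ is not meaningful; non-codeword information enters only through this separation argument (and through deciding which rules apply). Second, the paper's first step, which you leave as an unresolved obstacle, is to observe that a codeword with no codeword neighbour can receive share only under Rules~7--10, and then to run the case analysis in the order $10, 9, 8, 7$ with explicit mutual-exclusion arguments; without that reduction the ``finitely many configurations'' you invoke are not under control.
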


\begin{lemma} \label{SquareR2ReceivingLemma}
Let $\bc \in C$ be a codeword such that no share is shifted to $\bc$
according to the rules. Then we have $\ms_2(\bc) \leq 35/6$.
\end{lemma}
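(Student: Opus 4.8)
The plan is to bound the \emph{modified} share $\ms_2(\bc)$ for a codeword $\bc$ that does not receive any share. Since no share is shifted to $\bc$, we have $\ms_2(\bc) = s_2(\bc) - (\text{total share shifted out of }\bc)$, so it suffices to show that $s_2(\bc) - 35/6$ units are shifted away whenever $s_2(\bc) > 35/6$; this is precisely the statement of Lemma~\ref{SquareR2ReceivingLemma} once we observe that if $s_2(\bc) \le 35/6$ the conclusion is immediate (shifting only decreases the share of $\bc$). Thus the real content is: first estimate $s_2(\bc)$ from above using Lemma~\ref{SimpleEstLemma}, and then account for enough outgoing shifts via the rules of Section~\ref{SquareSubsectionRules} to absorb any excess over $35/6$.

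First I would split the analysis according to the local configuration of $\bc$, in particular whether $\bc$ is adjacent to another codeword, since the value of $s_2(\bc)$ and the applicable rules depend heavily on how many codewords lie in $B_2(\bc)$ and how they are arranged. For each configuration type I would apply Lemma~\ref{SimpleEstLemma} with a suitable choice of $C' \subseteq C$ (typically $C'$ consisting of $\bc$ together with the nearby codewords relevant to the $I$-sets inside $B_2(\bc)$) to obtain a concrete upper bound on $s_2(\bc)$. The key point is that a large share can only occur when the codewords near $\bc$ are sparse, i.e. when many vertices of $B_2(\bc)$ have small $I$-sets; but in exactly those sparse situations the rules force $\bc$ to shift share to the nearby vertices $\bv$ (or $\bv'$) that are poorly covered, and one checks case by case that the total outgoing amount is at least $s_2(\bc) - 35/6$.

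The main obstacle — and the reason the authors invoke a computer search here, unlike in Lemmas~\ref{SquareR2ReceiveLemma1} and \ref{SquareR2ReceiveLemma2} — is the sheer number of local configurations around $\bc$ that must be considered. Because $\bc$ is a \emph{giver} of share, whether and how much it sheds depends on the placement of codewords and non-codewords throughout $B_4(\bc)$ (the rules in Figure~\ref{SquareR2RuleFigure} reference vertices at distance up to $4$ from $\bc$, and the conditional rules such as Rule~1 and Rule~10 depend on $I$-sets of vertices on the boundary), so a purely by-hand case analysis is infeasible. The plan is therefore to reduce the problem to a finite check: enumerate all admissible codeword patterns in a bounded neighbourhood of $\bc$ that are consistent with $C$ being $2$-identifying, compute for each the upper bound on $s_2(\bc)$ from Lemma~\ref{SimpleEstLemma} together with the guaranteed outgoing shift dictated by the rules, and verify computationally that $\ms_2(\bc) \le 35/6$ in every case. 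The delicate part of setting up this search correctly is pruning the enormous configuration space using the separation constraints of an identifying code — ruling out patterns in which two vertices of $B_2(\bc)$ would share the same $I$-set — so that the exhaustive search remains tractable while still covering every genuinely possible local structure.
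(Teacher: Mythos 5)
Your proposal takes essentially the same route as the paper's proof: the paper also first disposes of the dense configurations by hand via Lemma~\ref{SimpleEstLemma} (a codeword adjacent to $\bc$ or at Euclidean distance $\sqrt{2}$, or two or three codewords among $(\pm 2,0),(0,\pm 2)$ combined with separation arguments, already force $s_2(\bc) \leq 35/6$, which suffices since $\bc$ receives nothing), and then reduces the remaining sparse cases ($I_2(\bc)=\{\bc\}$ or $I_2(\bc)=\{\bc,(2,0)\}$ up to symmetry) to an exhaustive computer search over subsets of $B_4(\bc) \setminus B_2(\bc)$ pruned by the $2$-separation constraints on pairs in $B_2(\bc)$. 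This is precisely the finite reduction you describe, justified exactly as you note by the facts that the outgoing rules depend only on the constellation inside $B_4(\bc)$ and that vertices of $B_2(\bc)$ are separated by codewords in $B_4(\bc)$.
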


\begin{thm} \label{SquareR2MainThm}
If $C$ is a $2$-identifying code in the square grid $G_S$, then we have
\[
D(C) \geq \frac{6}{35} \textrm{.}
\]
\end{thm}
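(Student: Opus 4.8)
The plan is to convert the per-codeword share bound $\ms_2(\bc) \le 35/6$ (guaranteed by Lemmas~\ref{SquareR2ReceiveLemma1}, \ref{SquareR2ReceiveLemma2} and \ref{SquareR2ReceivingLemma}) into a density lower bound by a finite-window counting argument. First I would fix a large $n$ and restrict attention to the box $Q_n$. The key identity to exploit is that summing the (original) share over all codewords reproduces the total covering count: for each vertex $u$, the quantity $1/|I_2(C;u)|$ is distributed among the codewords $2$-covering $u$, so $\sum_{\bc \in C} s_2(\bc)$ counts each vertex exactly once. The shifting scheme only moves share between codewords and never creates or destroys it, so $\sum_{\bc} \ms_2(\bc) = \sum_{\bc} s_2(\bc)$ over any region where no share crosses the boundary. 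The strategy is therefore to count, for $\bc \in C \cap Q_n$, the modified shares, and to show that the total modified share attributable to this box is, up to a boundary term that is negligible relative to $|Q_n|$, both equal to $|Q_n|$ (via the covering identity) and bounded above by $(35/6)\,|C \cap Q_n'|$ for a slightly enlarged box $Q_n'$.

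The main technical care is in handling the boundary. Since shares are shifted only over bounded distances (all rules act within $B_4(\bc)$, so share travels at most a fixed constant number of steps), a codeword in $Q_n$ can only send or receive share from codewords within that fixed radius. I would introduce the annulus of vertices within distance, say, $4$ of the boundary of $Q_n$, whose cardinality is $O(n)$ and hence $o(|Q_n|)$ since $|Q_n| = (2n+1)^2$. For every codeword $\bc$ strictly inside $Q_n$ (i.e.\ with $B_4(\bc) \subseteq Q_n$), all share entering or leaving $\bc$ stays inside $Q_n$, so the sum of $\ms_2$ over these interior codewords equals the sum of $s_2$ over them minus the share that leaks across into the boundary annulus, all of which is $O(n)$. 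Concretely I would write
\[
|Q_n| \;=\; \sum_{u \in Q_n} 1 \;=\; \sum_{u \in Q_n} \frac{|I_2(C;u)|}{|I_2(C;u)|},
\]
interpret the right-hand side as a sum of shares, and bound the resulting total by $(35/6)$ times the number of codewords in a constant-neighborhood of $Q_n$, with all discrepancies absorbed into an $O(n)$ error.

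Assembling these, I would obtain $|Q_n| \le \frac{35}{6}\,|C \cap Q_{n+4}| + O(n)$, and hence
\[
\frac{|C \cap Q_{n+4}|}{|Q_{n+4}|} \;\ge\; \frac{6}{35}\cdot\frac{|Q_n|}{|Q_{n+4}|} - \frac{O(n)}{|Q_{n+4}|}.
\]
Taking $\limsup$ as $n \to \infty$, the ratio $|Q_n|/|Q_{n+4}| \to 1$ and the error term $O(n)/|Q_{n+4}| \to 0$, which yields $D(C) \ge 6/35$ by the definition of density. The hard part is not any single estimate but the bookkeeping that guarantees the leaked share is genuinely $O(n)$ and that the covering identity is applied to the correct vertex set; once the shifting scheme is known to be share-preserving and locally supported, and once $\ms_2(\bc) \le 35/6$ holds for every codeword, the limit argument is routine. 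I would therefore spend the bulk of the write-up making the boundary accounting rigorous and defer the three share lemmas, which do the real combinatorial work, to their own proofs.
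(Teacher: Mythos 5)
Your proposal is correct and follows essentially the same route as the paper: both convert the uniform bound $\ms_2(\bc) \leq 35/6$ into a density bound by summing shares over a large box $Q_n$, using the covering identity to lower-bound the share sum by the size of a slightly smaller box, using conservation and locality of the shifting scheme to control the $O(n)$ boundary leakage (the paper bounds it by $\frac{35}{6}|Q_{n+3}\setminus Q_n|$ via Lemmas~\ref{SquareR2ReceiveLemma1} and \ref{SquareR2ReceiveLemma2}), and then taking the limit. The paper merely makes your $O(n)$ error terms explicit with the concrete boxes $Q_{n-2}$ and $Q_{n+3}$; the argument is otherwise the same.
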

\begin{proof}
Assume that $C$ is a $2$-identifying code in $G_S$. Recall that $Q_n = \{ (x,y) \in V_S \ | \ |x| \leq n, |y| \leq n \}$. Notice that each vertex $\bu \in Q_{n-2}$ with $|I_2(\bu)|=i$ contributes the summand $1/i$ to $s_2(\bc)$ for each of the $i$ codewords $\bc \in B_2(\bu)$. Therefore, we have
\begin{equation} \label{SquareR2MainEq1}
\sum_{\bc \in C \cap Q_n} s_2(\bc) \geq |Q_{n-2}| \textrm{.}
\end{equation}
Furthermore, we have
\begin{equation} \label{SquareR2MainEq2}
\sum_{\bc \in C \cap Q_n} s_2(\bc) \leq \sum_{\bc \in C \cap Q_n} \ms_2(\bc) + \frac{35}{6} |Q_{n+3} \setminus Q_n| \textrm{.}
\end{equation}
Indeed, shifting shares inside $Q_n$ does not affect the sum and each codeword in $Q_{n+3} \setminus Q_n$ can receive at most $35/6$ units of share (by Lemmas~\ref{SquareR2ReceiveLemma2} and \ref{SquareR2ReceiveLemma1}). Notice also that codewords in $Q_n$ cannot shift share to codewords outside $Q_{n+3}$. Therefore, combining the equations~\eqref{SquareR2MainEq1} and \eqref{SquareR2MainEq2} with the fact that $\ms_2(\bc) \leq 35/6$ for any $\bc \in C$, we obtain that
\[
\frac{|C \cap Q_n|}{|Q_n|} \geq \frac{6}{35} \cdot \frac{|Q_{n-2}|}{|Q_n|} - \frac{|Q_{n+3} \setminus Q_n|}{|Q_n|} \textrm{.}
\]
Since $|Q_k| = (2k+1)^2$ for any positive integer $k$, it is straightforward to conclude from the previous inequality that the density $D(C) \geq 6/35$.
\end{proof}

\subsection{The proofs of the lemmas} \label{SquareSubsectionLemmasProofs}

In what follows, we present the proofs of Lemmas~\ref{SquareR2ReceiveLemma1}, \ref{SquareR2ReceiveLemma2} and \ref{SquareR2ReceivingLemma}.

\begin{proof}[Proof of Lemma~\ref{SquareR2ReceiveLemma1}]
Let $\bc \in C$ be a codeword such that $\bc$ is adjacent to another codeword and share is shifted to $\bc$ according to the rules. Without loss of generality, we may assume that $\bc = (0,0)$. It is immediate that share can be shifted to $\bc$ only according to Rules~1--6 and 9. The proof now divides into four cases depending on the number of codewords adjacent to $\bc$.

\begin{figure}
\centering
\subfigure[Case (1)]{
\includegraphics[height=140pt]{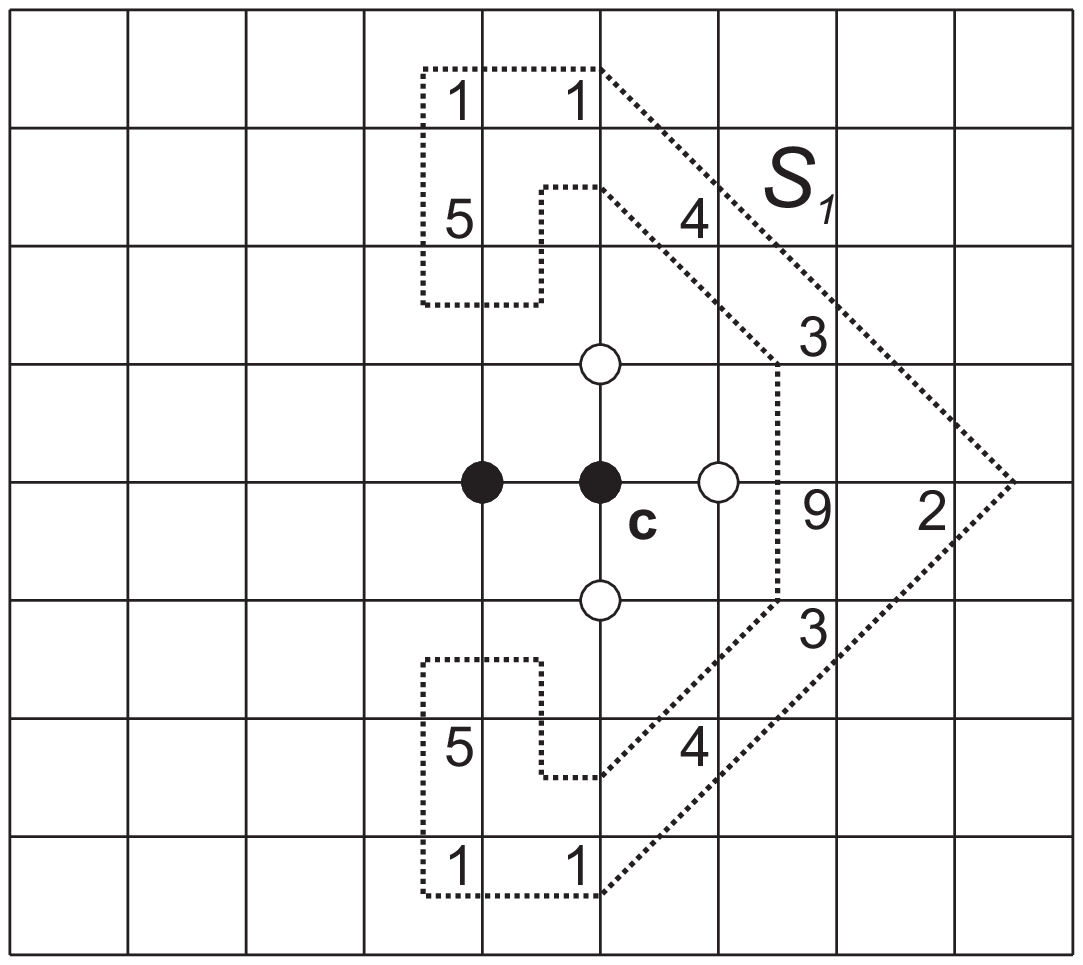}
\label{Lemma1Case1}
}
\subfigure[The first case of (2)]{
\includegraphics[height=140pt]{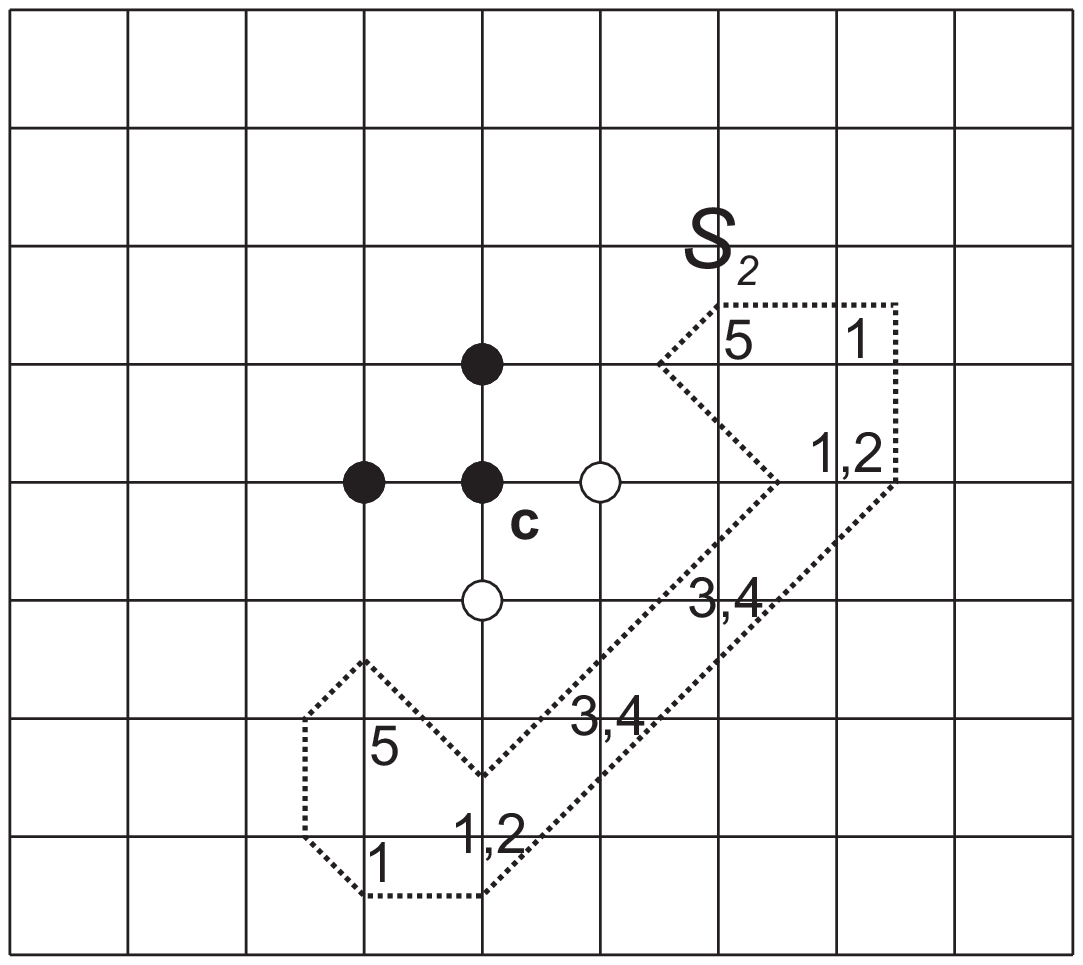}
\label{Lemma1Case2A}
}
\subfigure[The second case of (2)]{
\includegraphics[height=140pt]{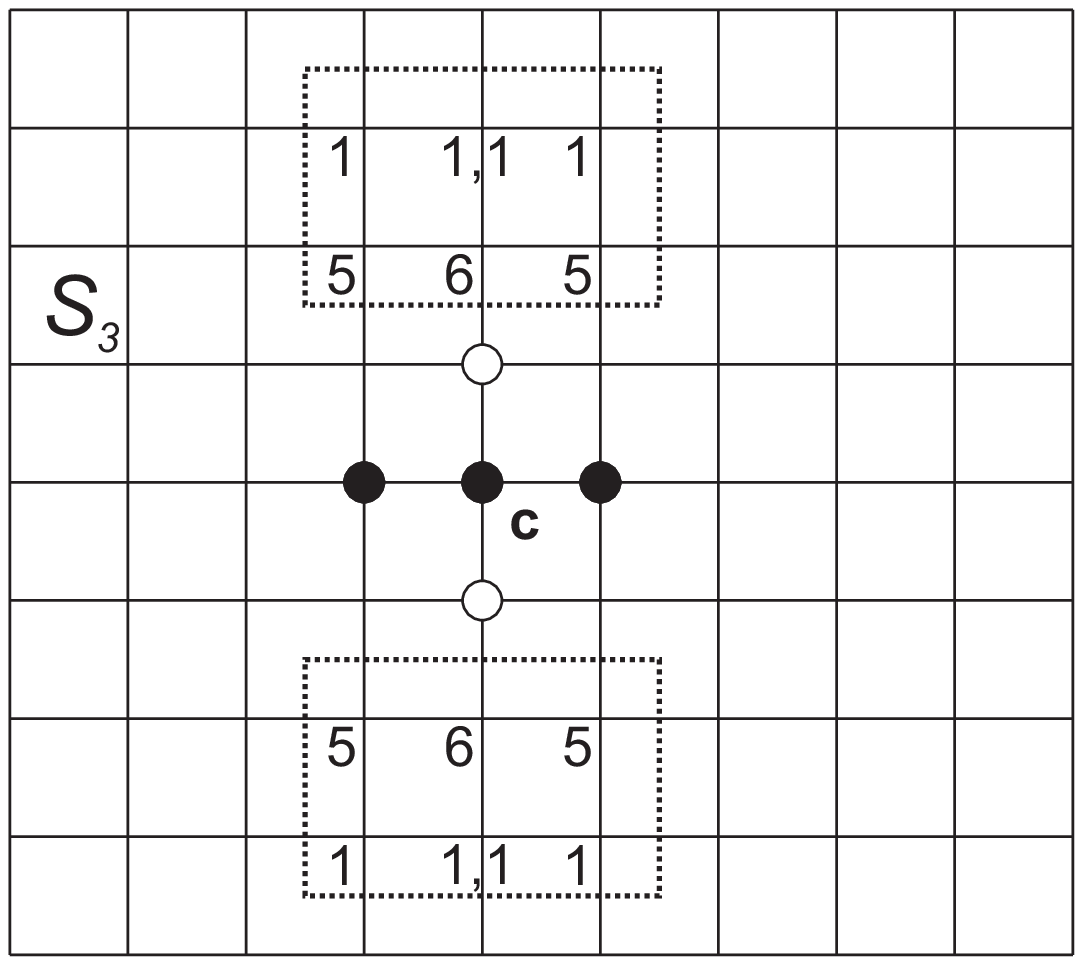}
\label{Lemma1Case2B}
}
\subfigure[Case (3)]{
\includegraphics[height=140pt]{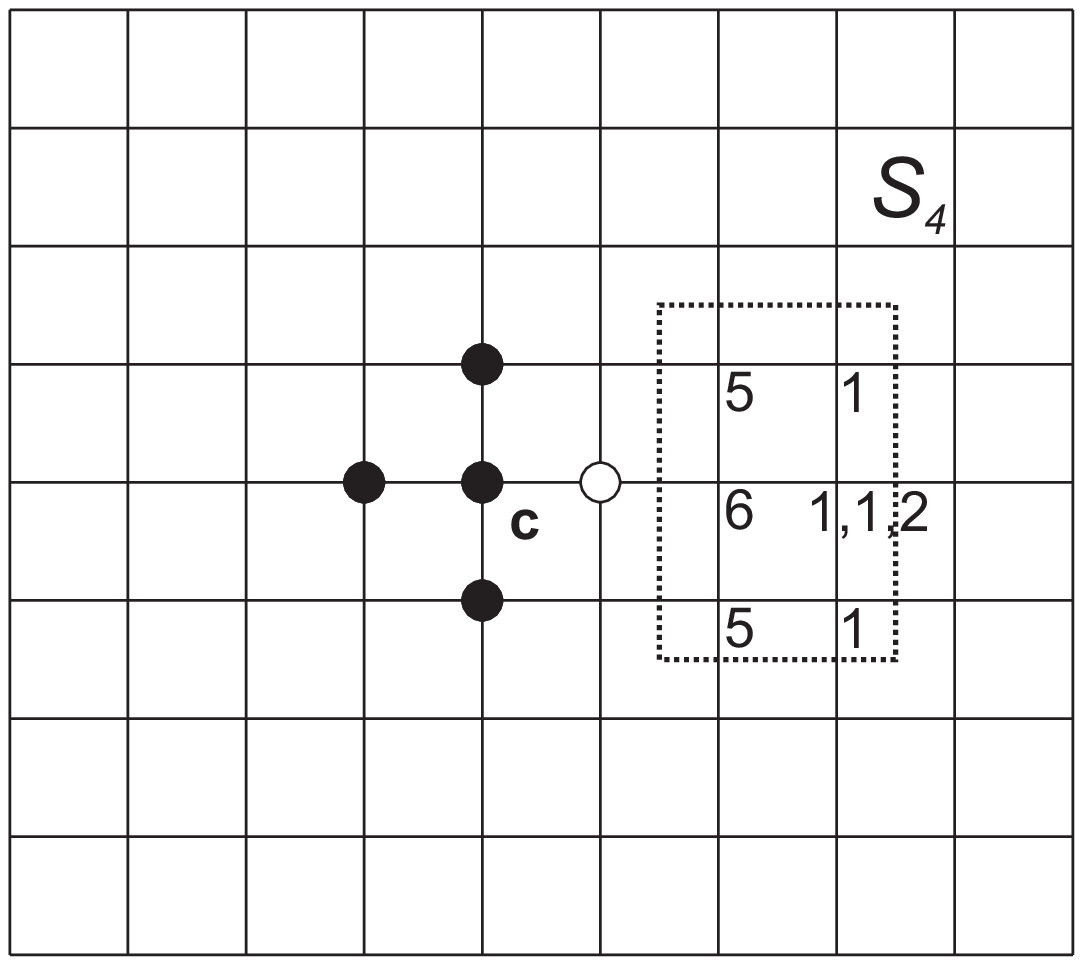}
\label{Lemma1Case3}
}
\caption{The cases of the proof of Lemma~\ref{SquareR2ReceiveLemma1} illustrated.}
\label{SquareSIllustrated}
\end{figure}

(1) Assume first that $\bc$ is adjacent to exactly one codeword. Now we may assume that the single codeword adjacent to $\bc$ is $(-1,0)$. Hence, the vertices $(1,0)$, $(0,-1)$ and $(0,1)$ are not codewords of $C$. Denote $X_1 = \{(-1,3), (-1,2), (0,3)\}$ and $X_2 = \{(-1,-3), (-1,-2), (0,-3)\}$. It is straightforward to conclude that share can be shifted to $\bc$ only from the vertices in the set
\[
S_1 = X_1 \cup X_2 \cup \{ (1, \pm 2), (2, \pm 1), (2,0), (3,0)\} \textrm{.}
\]
These observations are illustrated in Figure~\ref{Lemma1Case1}. In the figure, the number next to each vertex in $S_1$ state the rule according to which share is shifted. Notice also that if share is shifted from $X_i$, then exactly one of the vertices in $X_i$ is a codeword and other vertices are non-codewords. Therefore, at most $1/5$ units of share is shifted from each $X_i$. Since $C$ is a $2$-identifying code in $G_S$, the symmetric difference $B_2(-1,0) \, \triangle \, B_2(\bc)$ contains a codeword. 
Assuming first that at least one of the vertices in $B_2(\bc) \setminus B_2(-1,0)$ is a codeword, we obtain the following symmetrically different cases:
\begin{itemize}
\item Assume that $(2,0) \in C$. This implies that share cannot be shifted to $\bc$ from the vertices $(2,-1)$, $(2,1)$ and $(3,0)$. If $\bc$ receives share according to Rule~9, then the vertices $(1,-2)$ and $(1,2)$ belong to $C$. Now share can be shifted to $\bc$ only from the vertices $(-1,\pm 3)$, $(1,\pm 2)$ and $(2,0)$ according to Rules~1, 4 and 9, respectively. Choosing $C' = \{\bc, (-1,0),(1,-2),(1,2),(2,0) \}$ in Lemma~\ref{SimpleEstLemma}, we obtain that $s_2(\bc) \leq 73/15$. Therefore, we have $\ms_2(\bc) \leq s_2(\bc) + 2 \cdot 1/5 + 2 \cdot 7/60 + 7/60 \leq 337/60 \leq 35/6$.

    Assume then that $\bc$ does not receive share according to Rule~9. If both $(1,-2) \in C$ and $(1,2) \in C$, then as above we obtain that $\ms_2(\bc) \leq 35/6$. Assume then that exactly one of the vertices $(1,-2)$ and $(1,2)$ belongs to $C$. Without loss of generality, we may assume that $(1,2) \in C$ and $(1,-2) \notin C$. The share can be shifted to $\bc$ only from the vertex $(1,2)$ and from the sets $X_1$ and $X_2$. Therefore, at most $2 \cdot 1/5 + 7/60 = 31/60$ units of share is shifted to $\bc$. If share is shifted from the set $X_2$, then one of the vertices in $X_2$ is a codeword. Consequently, in each of the three cases, we have $s_2(\bc) \leq 59/12$ by Lemma~\ref{SimpleEstLemma}. Hence, we have $\ms_2(\bc) \leq s_2(\bc) + 31/60 \leq 163/30 \leq 35/6$. Thus, we may assume that $X_2 \cap C = \emptyset$ implying that at most $1/5 + 7/60 = 19/60$ units of share is shifted to $\bc$. By~Lemma~\ref{SimpleEstLemma} with the choice $C' = \{\bc, (-1,0),(2,0) \}$, we obtain that $s_2(\bc) \leq 65/12$. Therefore, we are done since $\ms_2(\bc) \leq s_2(\bc) + 19/60 \leq 86/15 \leq 35/6$. Thus, we may assume that $(1,2) \notin C$ and $(1,-2) \notin C$. Now at most $2/5$ units of share is shifted to $\bc$. We are again done since $\ms_2(\bc) \leq s_2(\bc) + 2/5 \leq 65/12 + 2/5 = 349/60 \leq 35/6$. 
\item Assume that $(2,0) \notin C$ and $(1,1) \in C$. By Example~\ref{SquareIllExampleAdjacentLemma}, we have $s_2(\bc) \leq 61/12$. The vertex $\bc$ can receive share from the vertices in $X_1 \cup X_2 \cup \{(1,-2), (2,-1), (3,0) \}$. It is easy to see that at most $\max\{ 1/12, 7/60 + 1/30 \}= 3/20$ units of share is shifted to $\bc$ from $\{(1,-2), (2,-1), (3,0) \}$. Therefore, $\bc$ receives in total at most $2 \cdot 1/5 + 3/20 =11/20$ units of share. Thus, $\ms_2(\bc) \leq s_2(\bc) + 11/20 \leq 61/12 + 11/20 = 169/30 \leq 35/6$ and we are done.
\item Assume that $(2,0) \notin C$, $(1,\pm 1) \notin C$ and $(0,2) \in C$. Now $\bc$ can receive share from the vertices in $X_2 \cup \{(1,-2), (2,\pm 1), (3,0) \}$. If $(1,-2) \in C$, then at most $1/5 + 7/60 + 1/12 = 2/5$ units of share is shifted to $\bc$. By Lemma~\ref{SimpleEstLemma} with the choice $C' = \{ \bc, (-1,0), (0,2), (1,-2)\}$, we obtain that $s_2(\bc) \leq 21/4$. Therefore, we are done since $\ms_2(\bc) \leq s_2(\bc) + 2/5 \leq 21/4 + 2/5 = 113/20 \leq 35/6$. If $(1,-2) \notin C$, then $\bc$ receives at most $1/5 + 1/12 = 17/60$ units share. Thus, since $s_2(\bc) \leq 11/2$ by Lemma~\ref{SimpleEstLemma}, we have $\ms_2(\bc) \leq s_2(\bc) + 17/60 \leq 11/2 + 17/60 = 347/60 \leq 35/6$ concluding the case.
\end{itemize}

Assume then that $B_2(\bc) \setminus B_2(-1,0)$ does not contain a codeword. Now share can be shifted to $\bc$ only from the vertices in $S_1 \setminus \{(-1, \pm 3), (0, \pm 3)\}$. Denote $Y_1 = \{(-1,2), (1,2), \linebreak (2,1)\}$ and $Y_2 = \{(-1,-2), (1,-2), (2,-1)\}$. It is easy to conclude that each set $Y_i$ can shift at most $\max\{ 1/12+1/30, 7/60\} = 7/60$ units of share to $\bc$. First let $(2,1)$ be a codeword of $C$. Choosing $C' = \{(\bc, (-1,0), (2,1)\}$ in Lemma~\ref{SimpleEstLemma}, we know that $s_2(\bc) \leq 67/12$. Moreover, $\bc$ receives at most $2 \cdot 7/60 = 7/30$ units of share (since share cannot be shifted from $(3,0)$). Thus, we are done since $\ms_2(\bc) \leq s_2(\bc) + 7/30 \leq 349/60 \leq 35/6$. Therefore, we may assume that $(2, -1) \notin C$ and $(2, 1) \notin C$. Now, if $Y_i$ shifts $7/60$ units of share to $\bc$, then the vertex $(1, (-1)^{i+1}\cdot 2)$ belongs to $C$. Consequently, if $7/60$ units of share is shifted from both $Y_1$ and $Y_2$, then $(1,\pm 2) \in C$ and by Lemma~\ref{SimpleEstLemma} we have $s_2(\bc) \leq 65/12$. Thus, we are done since $\ms_2(\bc) \leq s_2(\bc) + 2 \cdot 7/60 + 1/30 \leq 341/60 \leq 35/6$. Similarly, if $7/60$ units of share is shifted either from $Y_1$ or $Y_2$, then $s_2(\bc) \leq 67/12$ and we are done since $\ms_2(\bc) \leq s_2(\bc) + 7/60 + 2 \cdot 1/30 \leq 173/30 \leq 35/6$. Thus, we may assume that $(1,-2) \notin C$ and $(1,2) \notin C$. Furthermore, if $(-1,-2)$ or $(-1,2)$ belongs to $C$, then we have $s_2(\bc) \leq 17/3$ by Lemma~\ref{SimpleEstLemma}. Since now at most $3 \cdot 1/30 = 1/10$ units share can be shifted to $\bc$ from the vertices $(-1, \pm 2)$ and $(3,0)$ (other vertices are non-codewords), we obtain that $\ms_2(\bc) \leq s_2(\bc) + 1/10 \leq 173/30 \leq 35/60$. Thus, we may assume that $(-1, \pm 2) \notin C$ and, therefore, at most $1/30$ units of share can be shifted to $\bc$ from $(3,0)$. Recall that the set $B_2(-1,0) \setminus B_2(\bc)$ contains a codeword. With any choice for the codeword in this set, we obtain that $s_2(\bc) \leq 23/4$ by Lemma~\ref{SimpleEstLemma}. Thus, we have $\ms_2(\bc) \leq 23/4 + 1/30 = 347/60 \leq 35/6$. This completes the proof of the first case.

(2) Assume that $\bc$ is adjacent to exactly two codewords. The proof of the second case divides into the following symmetrically different cases:
\begin{itemize}
\item Assume first that $(-1,0), (0,1) \in C$ and $(1,0), (0,-1) \notin C$. We straightforwardly obtain that now share can be shifted to $\bc$ only from the vertices in
    \[
    S_2 = \{(-1,-3), (-1,-2), (0,-3), (1,-2), (2,-1), (2,1), (3,0), (3,1)\} \textrm{,}
    \]
    which is illustrated in Figure~\ref{Lemma1Case2A}. In the figure, the number(s) next to each vertex in $S_2$ state the rule(s) according to which share is shifted. As in Case~(1), we know that at most one vertex in each $X_2$ and $X_3 = \{(2,1),(3,0),(3,1)\}$ can shift share to $\bc$. Therefore, each $X_2$ and $X_3$ shifts at most $1/5 + 1/30 = 7/30$ units of share to $\bc$. Assume first that $(2,-1) \in C$. Then, by Lemma~\ref{SimpleEstLemma}, we have $s_2(\bc) \leq 151/30$. Moreover, at most $2 \cdot 7/30 + 1/12 + 7/60 = 2/3$ units of share is shifted to $\bc$. Therefore, we have $\ms_2(\bc) \leq s_2(\bc) + 2/3 \leq 57/10 \leq 35/6$ and we are done. Thus, by symmetry, we may assume that $(1,-2)$ and $(2,-1)$ do not belong to $C$. Then, by the previous observation, at most $2 \cdot 7/30$ units of share shifted to $\bc$. Choosing $C'= \{\bc, (-1,0), (0,1)\}$ in Lemma~\ref{SimpleEstLemma}, we obtain that $s_2(\bc) \leq 21/4$. Therefore, $\ms_2(\bc) \leq s_2(\bc) + 2 \cdot 7/30 \leq 343/60 \leq 35/6$ and we are done.
\item Assume then that $(-1,0),(1,0) \in C$ and $(0,-1),(0,1) \notin C$. Now share can be shifted to $\bc$ only from the vertices in
    \[
    S_3 =\{(x,y) \in V_S \ | \ x \in \{-1,0,1\}, y \in \{-3,-2,2,3\}\} \textrm{,}
    \]
    which is illustrated in Figure~\ref{Lemma1Case2B}. In the figure, the number(s) next to each vertex in $S_3$ state the rule(s) according to which share is shifted (if the same number is listed multiple times, then the corresponding rule can accordingly be used more than once). Using similar ideas as earlier, it can be concluded that at most $4 \cdot 1/5$ units of share is shifted to $\bc$. If any of the vertices $(-2,0)$, $(-1,-1)$, $(-1,1)$, $(1,-1)$, $(1,1)$ and $(2,0)$ is a codeword, then by Lemma~\ref{SimpleEstLemma} we have $s_2(\bc) < 5$ and we are immediately done. Hence, we may assume that none of these vertices is a codeword. If $(0,2)$ belongs to $C$, then at most $1/20$ units of share can be shifted from the upper part of $S_3$. Therefore, at most $2 \cdot 1/5 + 1/20$ units of share is shifted to $\bc$ and we are done since $s_2(\bc) \leq 307/60$. Hence, we may assume that $(0,-2) \notin C$ and $(0,2) \notin C$. Consequently, we know that the sets $B_2(\bc) \setminus B_2(-1,0)$ and $B_2(\bc) \setminus B_2(1,0)$ do not contain any codewords. This implies that share cannot be shifted to $\bc$ according to Rule~1. Therefore, share can be shifted from at most two vertices of $S_3$. Hence, at most $2 \cdot 1/20$ units of share is shifted to $\bc$. Thus, we are done since $s_2(\bc) \leq 31/6$ (by Lemma~\ref{SimpleEstLemma} with the choice $C' = \{\bc, (-1,0), (1,0)\}$).
\end{itemize}

(3) Assume that $\bc$ is adjacent to exactly three codewords. Without loss of generality, we may assume that $(-1,0),(0,-1),(0,1) \in C$ and $(1,0) \notin C$. Now share can be shifted to $\bc$ only from the vertices in
\[
S_4 =\{(x,y) \in V_S \ | \ x \in \{2,3\}, y \in \{-1,0,1\}\} \textrm{,}
\]
which is illustrated in Figure~\ref{Lemma1Case3}. In the figure, the number(s) next to each vertex in $S_4$ state the rule(s) according to which share is shifted (if the same number is listed multiple times, then the corresponding rule can accordingly be used more than once). By Lemma~\ref{SimpleEstLemma} with the choice $C' = \{\bc, (-1,0), (0,-1), (0,1)\}$, we obtain that $s_2(\bc) \leq 293/60$. It is immediate that the maximum amount of share is shifted to $\bc$ if the vertex $(3,0)$ is a codeword. Then at most $2 \cdot 1/5 + 1/30 = 13/30$ units of share is shifted. Thus, we have $\ms_2(\bc) \leq 293/60 + 13/30 \leq 35/6$.

(4) Finally, assume that $\bc$ is adjacent to exactly four codewords, i.e. all the adjacent vertices are codewords. Now share cannot be shifted to $\bc$ according to any rule. Therefore, we are done since $\ms_2(\bc) \leq s_2(\bc) \leq 21/5 \leq 35/6$. This concludes the proof of the lemma.
\end{proof}

\begin{proof}[Proof of Lemma~\ref{SquareR2ReceiveLemma2}]
Let $\bc \in C$ be a codeword such that $\bc$ is not adjacent to another codeword and share is shifted to $\bc$ according to the rules. Without loss of generality, we may assume that $\bc = (0,0)$. It is immediate that share can be shifted to $\bc$ only according to Rules~7--10.

Assume first that $\bc$ receives share according to Rule~10. Without of loss of generality, we may assume that the rule is applied as in Figure~\ref{SquareR2Rule10} with $\bc = \bv$. Recall that now there exists a codeword in $B_2(\bc) \setminus B_2(\bv')$. Let us then show that in this case Rules~7 and 8 cannot be applied to $\bc$. Assume to the contrary that Rule~7 is applied. Then the vertex $(1,-1)$ or $(1,1)$ belongs to $C$. If $(1,-1) \in C$, then share is shifted to $\bc$ from the vertices $(-2,-1)$ and $(1,2)$. However, this is impossible since $(-2,-1) \notin C$ and $(-1,2) \in C$, respectively. If $(1,1) \in C$, then, using reasoning analogous to above, we obtain a contradiction. Thus, Rule~7 cannot be applied to $\bc$. If Rule~8 is applied to $\bc$, then share is shifted to $\bc$ from the vertices $(-3,0)$, $(0,-3)$, $(0,3)$ or $(3,0)$. All these cases lead to contradictions since $(-3,0) \notin C$, $(0,-3) \notin C$, $(-1,2) \in C$ and $(1,-2) \notin C$, respectively. Using similar argumentation, it can be shown that Rule~9 can be applied to $\bc$ only in such a way that share is shifted to $\bc$ from the vertex $\bv' = (-2,0)$. Moreover, it is clear that share can be shifted to $\bc$ according to Rule~10 at most twice; namely, from the vertices $(-1,-2)$ and $(-1,2)$. Thus, at most $3 \cdot 7/60$ units of share can be shifted to $\bc$. Since $B_2(\bc) \setminus B_2(\bv')$ contains a codeword, at least one of the vertices $(1,-1)$, $(2,0)$, $(1,1)$ and $(0,2)$ is a codeword. If $(1,-1)$, $(2,0)$ or $(1,1)$ belongs to $C$, then $s_2(\bc) \leq 59/12$ and we are done since $\ms_2(\bc) \leq 59/12 + 3 \cdot 7/60 = 79/5 \leq 35/6$. If $(0,2) \in C$, then share cannot be shifted to $\bc$ from $(-1,2)$ according to Rule~10 and $s_2(\bc) \leq 67/12$. Hence, $\ms_2(\bc) \leq 67/12 + 2 \cdot 7/60 = 349/60 \leq 35/6$ and we are done.

Assume that $\bc$ receives share according to Rule~9 and that the rule is applied as in Figure~\ref{SquareR2Rule9} with $\bc = \bv$. Let us first show that now Rule~7 cannot be applied to $\bc$. Assuming to the contrary that Rule~7 can be applied, we know that $(-1,1)$ or $(1,1)$ belongs to $C$. By symmetry, we may assume that $(1,1) \in C$. Then share is received from the vertices $(-2,1)$ and $(1,-2)$. However, the facts that $(-2,-1) \in C$ and $(1,-2) \notin C$ respectively lead to contradictions. Hence, Rule~7 cannot be applied. If Rule~10 is applied to $\bc$, then the reasoning reduces to the one in the previous paragraph. If Rule~8 is applied to $\bc$, then $(-2,1)$, $(0,3)$ and $(2,1)$ belong to $C$ and share is shifted to $\bc$ from $(0,3)$. By Lemma~\ref{SimpleEstLemma}, we have $s_2(\bc) \leq 14/3$. Furthermore, it is easy to verify that Rules~8 and 9 can be applied to $\bc$ at most once. Therefore, we are done since $\ms_2(\bc) \leq s_2(\bc) + 7/60 + 1/20 \leq 35/6$. Thus, we may assume that only Rule~9 is applied to $\bc$. The rule can clearly be applied at most twice. Moreover, if the rule is used twice, then $(-2,1)$, $(0,2)$ and $(2,1)$ belong to $\bc$ and we are immediately done as above. Finally, assume that the rule is used only once. Since the vertices $\bc$ and $(0,-2)$ are $2$-separated, one of the vertices $(\pm 2, 0)$, $(\pm 1, 1)$, $(0,1)$ and $(0,2)$ is a codeword. In each case, $s_2(\bc) \leq 67/12$ and we are done since $\ms_2(\bc) \leq s_2(\bc) + 7/60 \leq 35/6$.

Assume that $\bc$ receives share according to Rule~8 and that the rule is applied as in Figure~\ref{SquareR2Rule8} with $\bc = \bv$. By the previous cases, we know that Rule 10 cannot be used. Similarly to the previous cases, we can also show that Rule~7 cannot be applied to $\bc$. Moreover, by the previous paragraph, we may assume that Rule~9 is not applied to $\bc$. If share is shifted to $\bc$ according to Rule~8 more than once, then the vertices $(-2,1)$, $(0,3)$ and $(2,1)$ are codewords of $C$ and we have $s_2(\bc) \leq 17/3$. Therefore, since at most $2 \cdot 1/20 = 1/10$ units of share is shifted to $\bc$, we have $\ms_2(\bc) \leq 17/3 + 1/10 = 173/30 \leq 35/6$. Thus, we may assume that only Rule~8 is applied to $\bc$ and only once. Since at most $1/20$ units of share is shifted to $\bc$, it is enough to show that $s_2(\bc) \leq 23/4 \leq 35/6 - 1/20$. Indeed this is the case, if $(\pm 2, 0)$, $(\pm 1, 1)$ or $(\pm 1, 2)$ belongs to $C$. Hence, we may assume that these vertices are not codewords of $C$. Since $C$ is a $2$-identifying code in $G_S$, the symmetric differences $B_2(-1,-1) \, \triangle \, B_2(-1,0)$, $B_2(0,0) \, \triangle \, B_2(0,1)$ and $B_2(1,-1) \, \triangle \, B_2(1,0)$ each contain a codeword. In other words, this means that each set $X_1 = \{(-3,-1), (-3,0), (-2,1)\}$, $X_2 = \{(-2,1), (0,3), (2,1)\}$ and $X_3 = \{(3,-1), (3,0), (2,1)\}$ contains a codeword, respectively. If $(-2,1) \in C$, then also one of the vertices in $X_3$ is a codeword. Consequently, in each case, we have $s_2(\bc) \leq 23/4$. Thus, we may assume that $(-2,1) \notin C$ and $(2,1) \notin C$. Then $(0,3)$ belongs to $C$ since $I_2(0,0) \, \triangle \, I_2(0,1) \neq \emptyset$. Furthermore, there exists a codeword both in set $X_1 \setminus \{(-2,1)\}$ and in set $X_3 \setminus \{(2,1)\}$ . In all the remaining cases, we have $s_2(\bc) \leq 23/4$.

Assume that $\bc$ receives share according to Rule~7 and that the rule is applied as in Figure~\ref{SquareR2Rule7} with $\bc = \bv$. By the previous cases, we know that Rules~8, 9 and 10 cannot be applied to $\bc$. Thus, $\bc$ can receive share only according to Rule~7. Moreover, using similar reasoning as above, it can be shown that share can be shifted to $\bc$ according to Rule~7 only from $(1,-2)$, and either from $(-2,1)$ or $(-1,2)$. Hence, at most $2 \cdot 7/60 = 7/30$ units of share is shifted to $\bc$. By Lemma~\ref{SimpleEstLemma}~with the choice $C'= \{\bc, (1,-2), (1,1)\}$, we have $s_2(\bc) \leq 67/12$. Therefore, we are done since $\ms_2(\bc) \leq 67/12 + 7/30 \leq 349/60 \leq 35/6$. This completes the proof of the lemma.
\end{proof}

\begin{proof}[Proof of Lemma~\ref{SquareR2ReceivingLemma}]
Let $\bc \in C$ be a codeword such that no share is shifted to $\bc$
according to the rules. Assume first that $\bc$ is adjacent to another codeword. Without loss of generality, we may assume that $\bc = (0,0)$ and $(-1,0) \in C$. Now, choosing $C'= \{ \bc, (-1,0) \}$ in Lemma~\ref{SimpleEstLemma}, we obtain that $s_2(\bc) \leq 35/6$. Hence, we have $\ms_2(\bc) \leq s_2(\bc) \leq 35/6$. Assume then that at least one of the vertices at Euclidean distance $\sqrt{2}$ from $\bc$ is a codeword. Without loss of generality, we may assume that $(-1,1) \in C$. Again, by Lemma~\ref{SimpleEstLemma}, we obtain that $s_2(\bc) \leq 35/6$ and, therefore, we are done.

Thus, we may assume that the vertices at Euclidean distance $1$ and $\sqrt{2}$ from $\bc$ are non-codewords. Then denote $A = \{(-2,0), (0,-2), (0,2), (2,0)\}$. If at least three of the vertices in $A$ are codewords, then by Lemma~\ref{SimpleEstLemma} $s_2(\bc) \leq 67/12 \leq 35/6$ and we are done. Assuming there are exactly two codewords in $A$, we obtain the following symmetrically different cases:
\begin{itemize}
\item Assume that $(-2,0)$ and $(2,0)$ belongs to $C$. Then $(0,-2)$ and $(0,2)$ do not belong to $C$. Since $C$ is a $2$-identifying code, the symmetric difference $B_2(0,-1) \, \triangle \, B_2(0,1)$ contains a codeword. Without loss of generality, we may assume that at least one of the vertices $(-2,1)$, $(-1,2)$ and $(0,3)$ is a codeword. In each case, we have $s_2(\bc) \leq 17/3 \leq 35/6$. Hence, we are done.
\item Assume that $(0,2)$ and $(2,0)$ belongs to $C$. The symmetric difference $B_2(-1,0) \, \triangle \linebreak B_2(0,-1)$ contains a codeword. Without loss of generality, we may assume that at least one of the vertices $(0,-3)$, $(1,-2)$ and $(2,-1)$ is a codeword. In each case, we again have $s_2(\bc) \leq 23/4 \leq 35/6$ and we are done.
\end{itemize}

We may now assume that at most one vertex of the set $A$ is a codeword. Namely, we have the following symmetrically different cases: $I_2(\bc) = \{\bc\}$ and $I_2(\bc) = \{\bc, (2,0)\}$. Notice first that since $C$ is a $2$-identifying code in $G_S$, the pairs of vertices $\bu$ and $\bv$ belonging to $B_2(\bc)$ are $2$-separated by the codewords contained in $B_4(\bc)$. Therefore, in order to prove that always $\ms_2(\bc) \leq 35/6$, it is enough to check that for each subset $F \subseteq B_4(\bc) \setminus B_2(\bc)$ such that the pairs of vertices $\bu, \bv \in B_2(\bc)$ are $2$-separated by $F \cup I_2(\bu)$, we have $\ms_2(\bc) \leq 35/6$ after the rules have been applied. Notice that the rules according to which share is shifted from $\bc$ depend only on the constellations of codewords and non-codewords inside $B_4(\bc)$. For both cases $I_2(\bc) = \{\bc\}$ and $I_2(\bc) = \{\bc, (2,0)\}$, we need to go through all the subsets of $B_4(\bc) \setminus B_2(\bc)$. Hence, for both cases, we have $2^{28}$ different sets to consider (since $|B_4(\bc) \setminus B_2(\bc)|=28$).

Depending on the implementation of the exhaustive computer search, the explained brute-force method might be a little bit too inefficient for practical purposes. In what follows, we explain a more sophisticated approach to execute the computations. The actual program code of this method is presented in Appendix. Notice first that the pairs of vertices $\bu$ and $\bv$ belonging to $B_1(\bc)$ are $2$-separated by the codewords contained in $B_3(\bc)$. For each set $D' \subseteq B_3(\bc) \setminus B_2(\bc)$ such that $D' \cup I_2(\bc)$ $2$-separates all the pairs in $B_1(\bc)$, we calculate the upper bound given by Lemma~\ref{SimpleEstLemma} when $D = D' \cup I_2(\bc)$ and if this upper bound is greater than $35/6$, then we add $D$ to the collection of sets $\St$. For the identifying sets $I_2(\bc) = \{\bc\}$ and $I_2(\bc) = \{\bc, (2,0)\}$, there exists 209 and 35 sets in the collection $\St$, respectively. For each set $D \in \St$, we then check that for all the subsets $F' \subseteq B_4(\bc) \setminus B_3(\bc)$ such that $F' \cup D$ $2$-separates all the pairs in $B_2(\bc)$, we have $\ms_2(\bc) \leq 35/6$ after the rules have been applied. This approach reduces the number of cases to $209 \cdot 2^{16}$ and $35 \cdot 2^{16}$ (since $|B_4(\bc) \setminus B_3(\bc)|=16$) when $I_2(\bc) = \{\bc\}$ and $I_2(\bc) = \{\bc, (2,0)\}$, respectively. This exhaustive computer search concludes the proof of the lemma.
\end{proof}

\section*{Appendix}

The computations needed in the proof of Lemma~\ref{SquareR2ReceivingLemma} have been executed with \emph{Mathematica}. The Mathematica notebook of the computations is available in \cite{JunnilaMathematica}. In what follows, the program code is presented with some guiding comments. The following basic functions are used later in the code:
\begin{itemize}
\item Function outputting a set $\{ (x,y) \in \Z^2 \ \, | \ \, |x|, |y| \leq n \}$.
\begin{verbatim}
CreateGrid[n_] :=
  Module[{G = {}, i, j},
   For[i = -n, i < n + 1, i++,
    	For[j = -n, j < n + 1, j++,
      		G = Union[G, {{i, j}}];
      ];
    ];
   Return[G];
   ];
\end{verbatim}
\item Function outputting $B_r((x,y))$ in a square grid.
\begin{verbatim}
rBallSquare[r_, x_, y_] :=
  Module[{B = {}, i, j},
   For[i = x - r, i < x + r + 1, i++,
    	For[j = y - r, j < y + r + 1, j++,
      		If[Abs[i - x] + Abs[j - y] <= r, B = Union[B, {{i, j}}]];
      ];
    ];
   Return[B];
   ];
\end{verbatim}\newpage
\item Function outputting $B_r((x,y))$ in a king grid, i.e., in a square grid with diagonals.
\begin{verbatim}
rBallKing[r_, x_, y_] :=
  Module[{B = {}, i, j},
   For[i = x - r, i < x + r + 1, i++,
    	For[j = y - r, j < y + r + 1, j++,
      		B = Union[B, {{i, j}}];
      ];
    ];
   Return[B];
   ];
\end{verbatim}
\item Function testing whether for each vertex $\bu \in J$ the intersection of $B_r(\bu)$ and the code $K$ is nonempty and unique.
\begin{verbatim}
IDonSquareGrid[K_, J_, r_] :=
  Module[{NoID = False, S = {}, i, L = {}},
   For[i = 1, i < Length[J] + 1, i++,
    L = Intersection[rBallSquare[r, J[[i]][[1]], J[[i]][[2]]], K];
    If[Length[L] < 1, NoID = True];
    S = Union[S, {L}];
    ];
   If[Length[J] > Length[S], NoID = True];
   Return[! NoID];
   ];
\end{verbatim}
\item Function outputting $s_r(K;(x,y))$.
\begin{verbatim}
CodeShare[K_, r_, x_, y_] :=
  Module[{B = rBallSquare[r, x, y], i, RShare = 0},
   For[i = 1, i < Length[B] + 1, i++,
    RShare =
      RShare +
       1/Length[
         Intersection[rBallSquare[r, B[[i]][[1]], B[[i]][[2]]], K]];
    ];
   Return[RShare];
   ];
\end{verbatim}
\item Function outputting an upper approximation of $s_r(K;(x,y))$ (by Lemma 2.1) given a radius r and a code K.
\begin{verbatim}
ApproximatedShare[K_, r_, x_, y_] :=
  Module[{B = rBallSquare[r, x, y], i, j, ISets = {},
    DifferentISets = {}, AShare = 0},
   For[i = 1, i < Length[B] + 1, i++,
    ISets =
      Append[ISets,
       Intersection[rBallSquare[r, B[[i]][[1]], B[[i]][[2]]], K]];
    ];
   DifferentISets = Union[ISets];
   For[j = 1, j < Length[DifferentISets] + 1, j++,
    AShare =
      AShare +
       1/Length[
         DifferentISets[[j]]] + (Count[ISets, DifferentISets[[j]]] -
          1)/(Length[DifferentISets[[j]]] + 1);
    ];
   Return[AShare];
   ];
\end{verbatim}
\end{itemize}

The program codes of the functions for shifting share are defined as follows. Given a code $K$ and a codeword $(x,y)$, the functions output the amount of share that is shifted from $(x,y)$ to other codewords.
\begin{verbatim}
ShiftingRule1[K_, x_, y_] :=
  Module[{B = rBallKing[1, x, y], i, k, RShare = 0},
   If[Intersection[K, B] != {{x, y}}, Return[0]];
   If[! MemberQ[K, {x + 2, y}] && ! MemberQ[K, {x + 2, y - 1}] &&
     MemberQ[K, {x + 3, y}] && MemberQ[K, {x + 3, y - 1}],
    RShare = RShare + 1/5;
    ];
   If[! MemberQ[K, {x + 2, y}] && ! MemberQ[K, {x + 2, y + 1}] &&
     MemberQ[K, {x + 3, y}] && MemberQ[K, {x + 3, y + 1}],
    RShare = RShare + 1/5;
    ];
   If[! MemberQ[K, {x - 2, y}] && ! MemberQ[K, {x - 2, y - 1}] &&
     MemberQ[K, {x - 3, y}] && MemberQ[K, {x - 3, y - 1}],
    RShare = RShare + 1/5;
    ];
   If[! MemberQ[K, {x - 2, y}] && ! MemberQ[K, {x - 2, y + 1}] &&
     MemberQ[K, {x - 3, y}] && MemberQ[K, {x - 3, y + 1}],
    RShare = RShare + 1/5;
    ];
   If[! MemberQ[K, {x, y + 2}] && ! MemberQ[K, {x - 1, y + 2}] &&
     MemberQ[K, {x, y + 3}] && MemberQ[K, {x - 1, y + 3}],
    RShare = RShare + 1/5;
    ];
   If[! MemberQ[K, {x, y + 2}] && ! MemberQ[K, {x + 1, y + 2}] &&
     MemberQ[K, {x, y + 3}] && MemberQ[K, {x + 1, y + 3}],
    RShare = RShare + 1/5;
    ];
   If[! MemberQ[K, {x, y - 2}] && ! MemberQ[K, {x - 1, y - 2}] &&
     MemberQ[K, {x, y - 3}] && MemberQ[K, {x - 1, y - 3}],
    RShare = RShare + 1/5;
    ];
   If[! MemberQ[K, {x, y - 2}] && ! MemberQ[K, {x + 1, y - 2}] &&
     MemberQ[K, {x, y - 3}] && MemberQ[K, {x + 1, y - 3}],
    RShare = RShare + 1/5;
    ];
   Return[RShare];
   ];

ShiftingRule2[K_, x_, y_] :=
  Module[{B = rBallKing[1, x, y], i, k, RShare = 0},
   If[Intersection[K, B] != {{x, y}}, Return[0]];
   If[! MemberQ[K, {x + 2, y}] && MemberQ[K, {x + 3, y}] &&
     MemberQ[K, {x + 4, y}],
    RShare = RShare + 1/30;
    ];
   If[! MemberQ[K, {x - 2, y}] && MemberQ[K, {x - 3, y}] &&
     MemberQ[K, {x - 4, y}],
    RShare = RShare + 1/30;
    ];
   If[! MemberQ[K, {x, y + 2}] && MemberQ[K, {x, y + 3}] &&
     MemberQ[K, {x, y + 4}],
    RShare = RShare + 1/30;
    ];
   If[! MemberQ[K, {x, y - 2}] && MemberQ[K, {x, y - 3}] &&
     MemberQ[K, {x, y - 4}],
    RShare = RShare + 1/30;
    ];
   Return[RShare];
   ];

ShiftingRule3[K_, x_, y_] :=
  Module[{B = rBallSquare[2, x, y], i, k, RShare = 0},
   If[Intersection[K, B] != {{x, y}}, Return[0]];
   If[MemberQ[K, {x + 2, y + 1}] && MemberQ[K, {x + 3, y + 1}],
    RShare = RShare + 1/12];
   If[MemberQ[K, {x + 2, y - 1}] && MemberQ[K, {x + 3, y - 1}],
    RShare = RShare + 1/12];
   If[MemberQ[K, {x - 2, y + 1}] && MemberQ[K, {x - 3, y + 1}],
    RShare = RShare + 1/12];
   If[MemberQ[K, {x - 2, y - 1}] && MemberQ[K, {x - 3, y - 1}],
    RShare = RShare + 1/12];
   If[MemberQ[K, {x + 1, y + 2}] && MemberQ[K, {x + 1, y + 3}],
    RShare = RShare + 1/12];
   If[MemberQ[K, {x - 1, y + 2}] && MemberQ[K, {x - 1, y + 3}],
    RShare = RShare + 1/12];
   If[MemberQ[K, {x + 1, y - 2}] && MemberQ[K, {x + 1, y - 3}],
    RShare = RShare + 1/12];
   If[MemberQ[K, {x - 1, y - 2}] && MemberQ[K, {x - 1, y - 3}],
    RShare = RShare + 1/12];
   Return[RShare];
   ];

ShiftingRule4[K_, x_, y_] :=
  Module[{B = rBallSquare[2, x, y], i, k, RShare = 0},
   If[Intersection[K, B] != {{x, y}}, Return[0]];
   If[MemberQ[K, {x + 2, y + 1}] && MemberQ[K, {x + 2, y + 2}],
    RShare = RShare + 7/60];
   If[MemberQ[K, {x + 2, y - 1}] && MemberQ[K, {x + 2, y - 2}],
    RShare = RShare + 7/60];
   If[MemberQ[K, {x - 2, y + 1}] && MemberQ[K, {x - 2, y + 2}],
    RShare = RShare + 7/60];
   If[MemberQ[K, {x - 2, y - 1}] && MemberQ[K, {x - 2, y - 2}],
    RShare = RShare + 7/60];
   If[MemberQ[K, {x + 1, y + 2}] && MemberQ[K, {x + 2, y + 2}],
    RShare = RShare + 7/60];
   If[MemberQ[K, {x - 1, y + 2}] && MemberQ[K, {x - 2, y + 2}],
    RShare = RShare + 7/60];
   If[MemberQ[K, {x + 1, y - 2}] && MemberQ[K, {x + 2, y - 2}],
    RShare = RShare + 7/60];
   If[MemberQ[K, {x - 1, y - 2}] && MemberQ[K, {x - 2, y - 2}],
    RShare = RShare + 7/60];
   Return[RShare];
   ];

ShiftingRule5[K_, x_, y_] :=
  Module[{B = rBallKing[1, x, y],
    S = Complement[rBallSquare[2, x, y], rBallKing[1, x, y]], i, k,
    RShare = 0},
   If[(Intersection[K, B] != {{x, y}}) || (Length[
        Intersection[S, K]] > 1), Return[0]];
   If[MemberQ[K, {x + 2, y}] && MemberQ[K, {x + 2, y + 1}],
    RShare = RShare + 1/30];
   If[MemberQ[K, {x + 2, y}] && MemberQ[K, {x + 2, y - 1}],
    RShare = RShare + 1/30];
   If[MemberQ[K, {x - 2, y}] && MemberQ[K, {x - 2, y + 1}],
    RShare = RShare + 1/30];
   If[MemberQ[K, {x - 2, y}] && MemberQ[K, {x - 2, y - 1}],
    RShare = RShare + 1/30];
   If[MemberQ[K, {x, y + 2}] && MemberQ[K, {x + 1, y + 2}],
    RShare = RShare + 1/30];
   If[MemberQ[K, {x, y + 2}] && MemberQ[K, {x - 1, y + 2}],
    RShare = RShare + 1/30];
   If[MemberQ[K, {x, y - 2}] && MemberQ[K, {x + 1, y - 2}],
    RShare = RShare + 1/30];
   If[MemberQ[K, {x, y - 2}] && MemberQ[K, {x - 1, y - 2}],
    RShare = RShare + 1/30];
   Return[RShare];
   ];

ShiftingRule6[K_, x_, y_] :=
  Module[{B = rBallKing[1, x, y],
    S = Complement[rBallSquare[2, x, y], rBallKing[1, x, y]], i, k,
    RShare = 0},
   If[(Intersection[K, B] != {{x, y}}) || (Length[
        Intersection[S, K]] > 1), Return[0]];
   If[MemberQ[K, {x + 2, y - 1}] && MemberQ[K, {x + 2, y}] &&
     MemberQ[K, {x + 2, y + 1}], RShare = RShare + 1/20];
   If[MemberQ[K, {x - 2, y - 1}] && MemberQ[K, {x - 2, y}] &&
     MemberQ[K, {x - 2, y + 1}], RShare = RShare + 1/20];
   If[MemberQ[K, {x - 1, y + 2}] && MemberQ[K, {x, y + 2}] &&
     MemberQ[K, {x + 1, y + 2}], RShare = RShare + 1/20];
   If[MemberQ[K, {x - 1, y - 2}] && MemberQ[K, {x, y - 2}] &&
     MemberQ[K, {x + 1, y - 2}], RShare = RShare + 1/20];
   Return[RShare];
   ];

ShiftingRule7[K_, x_, y_] :=
  Module[{B = rBallSquare[2, x, y], i, k, RShare = 0},
   If[Intersection[K, B] != {{x, y}}, Return[0]];
   If[MemberQ[K, {x + 3, y}],
    If[Intersection[rBallSquare[1, x + 2, y + 1],
       K] == {{x + 2, y + 1}}, RShare = RShare + 7/60];
    If[Intersection[rBallSquare[1, x + 2, y - 1],
       K] == {{x + 2, y - 1}}, RShare = RShare + 7/60];
    ];
   If[MemberQ[K, {x - 3, y}],
    If[Intersection[rBallSquare[1, x - 2, y + 1],
       K] == {{x - 2, y + 1}}, RShare = RShare + 7/60];
    If[Intersection[rBallSquare[1, x - 2, y - 1],
       K] == {{x - 2, y - 1}}, RShare = RShare + 7/60];
    ];
   If[MemberQ[K, {x, y + 3}],
    If[Intersection[rBallSquare[1, x + 1, y + 2],
       K] == {{x + 1, y + 2}}, RShare = RShare + 7/60];
    If[Intersection[rBallSquare[1, x - 1, y + 2],
       K] == {{x - 1, y + 2}}, RShare = RShare + 7/60];
    ];
   If[MemberQ[K, {x, y - 3}],
    If[Intersection[rBallSquare[1, x + 1, y - 2],
       K] == {{x + 1, y - 2}}, RShare = RShare + 7/60];
    If[Intersection[rBallSquare[1, x - 1, y - 2],
       K] == {{x - 1, y - 2}}, RShare = RShare + 7/60];
    ];
   Return[RShare];
   ];

ShiftingRule8[K_, x_, y_] :=
  Module[{B = rBallSquare[2, x, y], i, k, RShare = 0},
   If[Intersection[K, B] != {{x, y}}, Return[0]];
   If[! MemberQ[K, {x + 2, y - 1}] && !
      MemberQ[K, {x + 3, y - 1}] && ! MemberQ[K, {x + 2, y + 1}] && !
      MemberQ[K, {x + 3, y + 1}] && ! MemberQ[K, {x + 4, y}] && !
      MemberQ[K, {x + 1, y - 2}] && ! MemberQ[K, {x + 1, y + 2}] &&
     MemberQ[K, {x + 2, y - 2}] && MemberQ[K, {x + 2, y + 2}] &&
     MemberQ[K, {x + 3, y}],
    RShare = RShare + 1/20;
    ];
   If[! MemberQ[K, {x - 2, y - 1}] && !
      MemberQ[K, {x - 3, y - 1}] && ! MemberQ[K, {x - 2, y + 1}] && !
      MemberQ[K, {x - 3, y + 1}] && ! MemberQ[K, {x - 4, y}] && !
      MemberQ[K, {x - 1, y - 2}] && ! MemberQ[K, {x - 1, y + 2}] &&
     MemberQ[K, {x - 2, y - 2}] && MemberQ[K, {x - 2, y + 2}] &&
     MemberQ[K, {x - 3, y}],
    RShare = RShare + 1/20;
    ];
   If[! MemberQ[K, {x - 1, y + 2}] && !
      MemberQ[K, {x - 1, y + 3}] && ! MemberQ[K, {x + 1, y + 2}] && !
      MemberQ[K, {x + 1, y + 3}] && ! MemberQ[K, {x, y + 4}] && !
      MemberQ[K, {x - 2, y + 1}] && ! MemberQ[K, {x + 2, y + 1}] &&
     MemberQ[K, {x - 2, y + 2}] && MemberQ[K, {x + 2, y + 2}] &&
     MemberQ[K, {x, y + 3}],
    RShare = RShare + 1/20;
    ];
   If[! MemberQ[K, {x - 1, y - 2}] && !
      MemberQ[K, {x - 1, y - 3}] && ! MemberQ[K, {x + 1, y - 2}] && !
      MemberQ[K, {x + 1, y - 3}] && ! MemberQ[K, {x, y - 4}] && !
      MemberQ[K, {x - 2, y - 1}] && ! MemberQ[K, {x + 2, y - 1}] &&
     MemberQ[K, {x - 2, y - 2}] && MemberQ[K, {x + 2, y - 2}] &&
     MemberQ[K, {x, y - 3}],
    RShare = RShare + 1/20;
    ];
   Return[RShare];
   ];

ShiftingRule9[K_, x_, y_] :=
  Module[{B = rBallKing[1, x, y],
    S = Complement[rBallSquare[2, x, y], rBallKing[1, x, y]], i, k,
    RShare = 0},
   If[(Intersection[K, B] != {{x, y}}) || (Length[
        Intersection[S, K]] > 1), Return[0]];
   If[MemberQ[K, {x + 2, y}] && MemberQ[K, {x + 1, y - 2}] &&
     MemberQ[K, {x + 1, y + 2}] && ! MemberQ[K, {x + 2, y - 1}] && !
      MemberQ[K, {x + 2, y + 1}],
    RShare = RShare + 7/60;
    ];
   If[MemberQ[K, {x - 2, y}] && MemberQ[K, {x - 1, y - 2}] &&
     MemberQ[K, {x - 1, y + 2}] && ! MemberQ[K, {x - 2, y - 1}] && !
      MemberQ[K, {x - 2, y + 1}],
    RShare = RShare + 7/60;
    ];
   If[MemberQ[K, {x, y + 2}] && MemberQ[K, {x - 2, y + 1}] &&
     MemberQ[K, {x + 2, y + 1}] && ! MemberQ[K, {x - 1, y + 2}] && !
      MemberQ[K, {x + 1, y + 2}],
    RShare = RShare + 7/60;
    ];
   If[MemberQ[K, {x, y - 2}] && MemberQ[K, {x - 2, y - 1}] &&
     MemberQ[K, {x + 2, y - 1}] && ! MemberQ[K, {x - 1, y - 2}] && !
      MemberQ[K, {x + 1, y - 2}],
    RShare = RShare + 7/60;
    ];
   Return[RShare];
   ];

ShiftingRule10[K_, x_, y_] :=
  Module[{B = rBallSquare[2, x, y], i, k, RShare = 0},
   If[Intersection[K, B] != {{x, y}}, Return[0]];
   If[! MemberQ[K, {x + 3, y - 1}] && ! MemberQ[K, {x + 3, y}] && !
      MemberQ[K, {x + 3, y + 1}] && MemberQ[K, {x + 2, y - 1}] &&
     MemberQ[K, {x + 2, y + 1}] && ! MemberQ[K, {x + 2, y - 2}] && !
      MemberQ[K, {x + 2, y + 2}] && MemberQ[K, {x + 4, y}],
    RShare = RShare + 7/60;
    ];
   If[! MemberQ[K, {x - 3, y - 1}] && ! MemberQ[K, {x - 3, y}] && !
      MemberQ[K, {x - 3, y + 1}] && MemberQ[K, {x - 2, y - 1}] &&
     MemberQ[K, {x - 2, y + 1}] && ! MemberQ[K, {x - 2, y - 2}] && !
      MemberQ[K, {x - 2, y + 2}] && MemberQ[K, {x - 4, y}],
    RShare = RShare + 7/60;
    ];
   If[! MemberQ[K, {x - 1, y + 3}] && ! MemberQ[K, {x, y + 3}] && !
      MemberQ[K, {x + 1, y + 3}] && MemberQ[K, {x - 1, y + 2}] &&
     MemberQ[K, {x + 1, y + 2}] && ! MemberQ[K, {x - 2, y + 2}] && !
      MemberQ[K, {x + 2, y + 2}] && MemberQ[K, {x, y + 4}],
    RShare = RShare + 7/60;
    ];
   If[! MemberQ[K, {x - 1, y - 3}] && ! MemberQ[K, {x, y - 3}] && !
      MemberQ[K, {x + 1, y - 3}] && MemberQ[K, {x - 1, y - 2}] &&
     MemberQ[K, {x + 1, y - 2}] && ! MemberQ[K, {x - 2, y - 2}] && !
      MemberQ[K, {x + 2, y - 2}] && MemberQ[K, {x, y - 4}],
    RShare = RShare + 7/60;
    ];
   Return[RShare];
   ];
\end{verbatim}

Assume that $I_2((0,0))= \{(0,0)\}$. Consider sets $D' \subseteq B_3((0,0)) \setminus B_2((0,0))$ such that the set $D = D' \cup \{(0,0)\}$ $2$-separates all the pairs of vertices in $B_1((0,0))$ and that the approximated share (by Lemma 2.1) is greater than 35/6. There exist $209$ such sets $D$ and these sets are listed in \verb"Problems1".
\begin{verbatim}
Problems1 = {};
TestSpace = rBallSquare[1, 0, 0];
SearchSpace = Complement[rBallSquare[3, 0, 0], rBallSquare[2, 0, 0]];
For[i = 1, i < 2^(Length[SearchSpace]) + 1, i++,
  ProposedSet = Union[Subsets[SearchSpace, All, {i}][[1]], {{0, 0}}];
  If[IDonSquareGrid[ProposedSet, TestSpace,
     2] && (ApproximatedShare[ProposedSet, 2, 0, 0] > 35/6),
   Problems1 = Union[Problems1, {ProposedSet}];
   ];
  ];
\end{verbatim}
In what follows, for each set $D = D' \cup \{(0,0)\}$ in \verb"Problems1" inducing problems, we check that for all the subsets $F' \subseteq B_4((0,0)) \setminus B_3((0,0))$ such that $F' \cup D$ 2-separates all the pairs of vertices in $B_2((0,0))$, we have that the share is at most 35/6 after the rules have been applied. Thus, in total, we have to go through $209 \cdot 2^{16}$ different cases.
\begin{verbatim}
G = CreateGrid[10];
GridG = ListPlot[G, AspectRatio -> 1];
TestSpace = rBallSquare[2, 0, 0];
SearchSpace = Complement[rBallSquare[4, 0, 0], rBallSquare[3, 0, 0]];
For[i = 1, i < Length[Problems1] + 1, i++,
  MaxShiftedShare = 0;
  For[j = 1, j < 2^(Length[SearchSpace]) + 1, j++,
   ProposedSet =
    Union[Subsets[SearchSpace, All, {j}][[1]], Problems1[[i]]];
   If[IDonSquareGrid[ProposedSet, TestSpace, 2],
    TempShare =
     CodeShare[ProposedSet, 2, 0, 0] -
      ShiftingRule1[ProposedSet, 0, 0] -
      ShiftingRule2[ProposedSet, 0, 0] -
      ShiftingRule3[ProposedSet, 0, 0] -
      ShiftingRule4[ProposedSet, 0, 0] -
      ShiftingRule5[ProposedSet, 0, 0] -
      ShiftingRule6[ProposedSet, 0, 0] -
      ShiftingRule7[ProposedSet, 0, 0] -
      ShiftingRule8[ProposedSet, 0, 0] -
      ShiftingRule9[ProposedSet, 0, 0] -
      ShiftingRule10[ProposedSet, 0, 0];
    MaxShiftedShare = Max[MaxShiftedShare, TempShare];
    ];
   ];
  Print["Problem set D = ", Problems1[[i]],
   " Maximum share after shifting for D = ", MaxShiftedShare, " ~ ",
   N[MaxShiftedShare]];
  ];
\end{verbatim}

Assume that $I_2((0,0))= \{(0,0),(2,0)\}$. Consider sets $D' \subseteq B_3((0,0)) \setminus B_2((0,0))$ such that the set $D = D' \cup \{(0,0),(2,0)\}$ $2$-separates all the pairs of vertices in $B_1((0,0))$ and that the approximated share (by Lemma 2.1) is greater than 35/6. There exist $35$ such sets $D$ and these sets are listed in \verb"Problems2".
\begin{verbatim}
Problems2 = {};
TestSpace = rBallSquare[1, 0, 0];
SearchSpace = Complement[rBallSquare[3, 0, 0], rBallSquare[2, 0, 0]];
For[i = 1, i < 2^(Length[SearchSpace]) + 1, i++,
  ProposedSet =
   Union[Subsets[SearchSpace, All, {i}][[1]], {{0, 0}, {2, 0}}];
  If[IDonSquareGrid[ProposedSet, TestSpace,
     2] && (ApproximatedShare[ProposedSet, 2, 0, 0] > 35/6),
   Problems2 = Union[Problems2, {ProposedSet}];
   ];
  ];
\end{verbatim}
In what follows, for each set $D = D' \cup \{(0,0),(2,0)\}$ in \verb"Problems2" inducing problems, we check that for all the subsets $F' \subseteq B_4((0,0)) \setminus B_3((0,0))$ such that $F' \cup D$ 2-separates all the pairs of vertices in $B_2((0,0))$, we have that the share is at most 35/6 after the rules have been applied. Thus, in total, we have to go through $35 \cdot 2^{16}$ different cases.
\begin{verbatim}
G = CreateGrid[10];
GridG = ListPlot[G, AspectRatio -> 1];
TestSpace = rBallSquare[2, 0, 0];
SearchSpace = Complement[rBallSquare[4, 0, 0], rBallSquare[3, 0, 0]];
For[i = 1, i < Length[Problems2] + 1, i++,
  MaxShiftedShare = 0;
  For[j = 1, j < 2^(Length[SearchSpace]) + 1, j++,
   ProposedSet =
    Union[Subsets[SearchSpace, All, {j}][[1]], Problems2[[i]]];
   If[IDonSquareGrid[ProposedSet, TestSpace, 2],
    TempShare =
     CodeShare[ProposedSet, 2, 0, 0] -
      ShiftingRule1[ProposedSet, 0, 0] -
      ShiftingRule2[ProposedSet, 0, 0] -
      ShiftingRule3[ProposedSet, 0, 0] -
      ShiftingRule4[ProposedSet, 0, 0] -
      ShiftingRule5[ProposedSet, 0, 0] -
      ShiftingRule6[ProposedSet, 0, 0] -
      ShiftingRule7[ProposedSet, 0, 0] -
      ShiftingRule8[ProposedSet, 0, 0] -
      ShiftingRule9[ProposedSet, 0, 0] -
      ShiftingRule10[ProposedSet, 0, 0];
    MaxShiftedShare = Max[MaxShiftedShare, TempShare];
    ];
   ];
  Print["Problem set D = ", Problems1[[i]],
   " Maximum share after shifting for D = ", MaxShiftedShare, " ~ ",
   N[MaxShiftedShare]];
  ];
\end{verbatim}

\bibliographystyle{abbrv}

\end{document}